\newcommand{\hash}[1]{{\ttfamily\seqsplit{#1}}}
\DeclareMathAlphabet{\mathpzc}{OT1}{pzc}{m}{it}
\providecommand{\BG}{{\mathbb{G}}}
\providecommand{\BN}{{\mathbb{N}}}
\providecommand{\BP}{{\mathbb{P}}}
\providecommand{\BQ}{{\mathbb{Q}}}
\providecommand{\BZ}{{\mathbb{Z}}}
\providecommand{\CF}{{\mathpzc{f}}}
\providecommand{\CD}{{\mathcal{D}}}
\providecommand{\CE}{{\mathcal{E}}}
\providecommand{\CL}{{\mathcal{L}}}
\providecommand{\CM}{{\mathcal{M}}}
\providecommand{\CO}{{\mathcal{O}}}
\providecommand{\CR}{{\mathcal{R}}}
\providecommand{\CT}{{\mathcal{T}}}
\providecommand{\CU}{{\mathcal{U}}}
\providecommand{\CV}{{\mathcal{V}}}
\providecommand{\CX}{{\mathcal{X}}}
\providecommand{\CY}{{\mathcal{Y}}}
\providecommand{\CZ}{{\mathcal{Z}}}
\newcommand{\ov}[1]{\overline{#1}}
\newcommand{\wt}[1]{\widetilde{#1}}
\DeclareMathOperator{\Spec}{Spec}
\DeclareMathOperator{\Gal}{Gal}
\DeclareMathOperator{\Frob}{Frob}
\DeclareMathOperator{\Irr}{Irr}
\DeclareMathOperator{\maxdeg}{maxdeg}
\DeclareMathOperator{\N}{N}
\newtheorem{theorem}{Theorem}[section]
\newtheorem{proposition}[theorem]{Proposition}
\newtheorem{lemma}[theorem]{Lemma}
\newtheorem{corollary}[theorem]{Corollary}
\newtheorem{conjecture}[theorem]{Conjecture}
\theoremstyle{definition}
\newtheorem{definition}[theorem]{Definition}
\theoremstyle{remark}
\newtheorem{example}[theorem]{Example}
\newtheorem{remark}[theorem]{Remark}
\begin{document}

\title{Arithmetic Surjectivity for Zero-Cycles}
\author{Damián Gvirtz}

\begin{abstract}
Let $f:X\to Y$ be a proper, dominant morphism of smooth varieties over a number field $k$. When is it true that for almost all places $v$ of $k$, the fibre $X_P$ over any point $P\in Y(k_v)$ contains a zero-cycle of degree $1$? We develop a necessary and sufficient condition to answer this question.

The proof extends logarithmic geometry tools that have recently been developed by Denef and Loughran-Skorobogatov-Smeets to deal with analogous Ax-Kochen type statements for rational points.
\end{abstract}

\maketitle

\section{Introduction}
In \cite{pseudo-split}, Loughran-Skorobogatov-Smeets develop, building upon work of Denef \cite{denef}, a necessary and sufficient criterion to say when a morphism of varieties over a number field $k$ is surjective on $k_v$-points for almost all finite places $v$. This property is called \emph{arithmetic surjectivity} by Colliot-Thélène \cite[\S13]{cime}. More precisely, Loughran et.\ al.\ define that a variety $X$ over a perfect field is \emph{pseudo-split} if every Galois automorphism over the ground field fixes some geometric irreducible component of $X$ of multiplicity $1$. They then prove:

\begin{theorem}\cite[Theorem 1.4]{pseudo-split}\label{thm:lss}
 Let $f:X\to Y$ be a dominant morphism between proper, smooth, geometrically integral varieties over a number field $k$ with geometrically integral generic fibre.
 
 Then $f$ is arithmetically surjective if and only if for each modification $f':X'\to Y'$ of $f$ and for each codimension $1$ point $\vartheta'$ in $Y'$, the fibre $f'^{-1}(\vartheta')$ is pseudo-split.
\end{theorem}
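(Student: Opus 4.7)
The plan is to prove the two implications separately, with the sufficiency direction requiring the more substantial input from logarithmic geometry.

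\emph{Sufficiency.} Fix a finite place $v$ of good reduction (in a sense to be made precise) and a point $P\in Y(k_v)$; I want to produce a preimage in $X(k_v)$. Since $Y$ is proper, $P$ spreads out to a section $s:\Spec\CO_v\to\CY$ of a suitable proper $\CO_v$-model $\CY$ of $Y$, and the existence of a lift of $P$ reduces, via the valuative criterion and Hensel's lemma, to exhibiting a smooth $\BF_v$-point in the fibre of an integral model $\CX\to\CY$ above $s\bmod v$. By choosing a modification $f':X'\to Y'$ adapted to $P$, I arrange that the unique lift $P'$ of $P$ to $Y'(k_v)$ specialises to the generic point of a component of a normal crossings divisor, so that the relevant fibre is $f'^{-1}(\vartheta')$ for a codimension $1$ point $\vartheta'\in Y'$. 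The pseudo-split hypothesis then furnishes a geometric irreducible component $Z$ of multiplicity $1$ fixed by the Frobenius element. For $q_v$ sufficiently large, Lang--Weil produces a smooth $\BF_v$-point of $Z$, and Hensel lifts it to the desired $\CO_v$-point of $\CX$.

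\emph{Necessity.} I would argue by contraposition. Suppose some modification $f':X'\to Y'$ has a non-pseudo-split fibre over a codimension $1$ point $\vartheta'$, witnessed by a Galois element $\sigma$ of the residue field extension at $\vartheta'$ that fixes no geometric component of multiplicity $1$. By the Chebotarev density theorem, there is a positive density of finite places $v$ at which Frobenius realises $\sigma$. For each such $v$, I would select a $k_v$-point of $Y$ specialising through an integral model of $Y'$ to the generic point of a component lying over $\vartheta'$; the combinatorial structure of $f'^{-1}(\vartheta')$ then forces every $\BF_v$-point of the fibre to lie on a component of multiplicity $\geq 2$ or to be singular, so Hensel's lemma offers no lift. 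Hence this $k_v$-point is absent from $f(X(k_v))$, contradicting arithmetic surjectivity.

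The principal obstacle I anticipate is the compatibility between modifications and the two implications. In the sufficiency step one must know that a modification well adapted to each individual $P$ exists, with a special fibre whose stratification can be controlled uniformly in $v$; in the necessity step one must know that the Galois-theoretic obstruction on $f'$ genuinely descends to $f$ and cannot be removed by further blowing up. This is where the logarithmic framework of Denef and Loughran--Skorobogatov--Smeets becomes indispensable: recording the situation as a log-smooth morphism along a normal crossings divisor yields a local model in which both the specialisation of $P$ and the fibre stratification of $f'^{-1}(\vartheta')$ are simultaneously visible, and birational invariance of the pseudo-split criterion on codimension $1$ fibres becomes part of the formalism. Once that is in place, the Lang--Weil and Hensel inputs described above complete both directions with little further work.
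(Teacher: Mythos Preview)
The paper does not prove this statement at all: Theorem~\ref{thm:lss} is quoted as background from \cite[Theorem 1.4]{pseudo-split}, and the paper's own work is the zero-cycles analogue Theorem~\ref{thm:main} (equivalently Theorem~\ref{thm:mainr}). So strictly there is no ``paper's own proof'' to compare against. That said, the paper's proof of Theorem~\ref{thm:mainr} is modelled closely on the proof of Theorem~\ref{thm:lss} in \cite{pseudo-split}, so one can compare your sketch to that architecture as reflected in Sections~4 and~5 here.

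Your necessity argument is essentially the right one and matches Proposition~\ref{prop:non-surjectivity}: Chebotarev produces bad places, and one manufactures a $k_v$-point whose integral section meets the bad divisor transversally (via a blow-up in a closed point), so that the pulled-back model is regular with non-split special fibre.

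Your sufficiency sketch, however, has a genuine gap. You write ``by choosing a modification $f':X'\to Y'$ adapted to $P$'', i.e.\ a modification depending on the individual $k_v$-point. This does not work: the finite set of excluded places depends on the chosen modification, so a $P$-dependent choice gives no uniform statement. The actual mechanism (Propositions~\ref{prop:intersection-bound} and~\ref{prop:ultimate-mod} here, following \cite{pseudo-split}) is more delicate: one first proves a \emph{logarithmic height bound}~$N$ such that only points with $h_\CY(y)\leq N$ need be tested, and then constructs a \emph{single} modification---obtained from $N-1$ barycentric log blow-ups of the target---for which every such point meets the boundary divisor transversally in a smooth point of a codimension~$1$ stratum. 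Only then do the pseudo-split hypothesis on that stratum, Lang--Weil, and Hensel combine as you describe. Your final paragraph gestures at this difficulty but does not resolve it; the height-bound step and the uniform barycentric modification are the substantive content you are missing.
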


By a \emph{modification} of $f$, we mean a morphism $f':X'\to Y'$ of proper, smooth, geometrically integral varieties over $k$ such that there exist proper, birational morphisms $\alpha_X:X'\to X$  and $\alpha_Y:Y'\to Y$ with $f'\circ\alpha_X=\alpha_Y\circ f$.

In this paper, we closely follow and extend the methods from \cite{pseudo-split} to deal with the analogous question for zero-cycles. We introduce the notion of \emph{combinatorial cycle-splitness} and prove:
\begin{theorem}\label{thm:main}
 Let $f:X\to Y$ be a dominant morphism between proper, smooth, geometrically integral varieties over a number field $k$ with geometrically integral generic fibre.
 
 The following statements are equivalent:
 \begin{enumerate}[(i)]
  \item For almost all places $v$, $f$ has a $v$-adic zero-cycle of degree $1$ in all fibres over $k_v$-points.
  \item For each modification $f':X'\to Y'$ and for each codimension $1$ point $\vartheta'$ in $Y'$, the fibre $f'^{-1}(\vartheta')$ is combinatorially cycle-split.
 \end{enumerate}

\end{theorem}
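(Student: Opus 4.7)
The plan is to follow the strategy used by Loughran--Skorobogatov--Smeets in their proof of Theorem \ref{thm:lss}, replacing the pseudo-splitness criterion by combinatorial cycle-splitness at every step. The central reformulation is local: for a smooth variety $Z$ over a $p$-adic field $k_v$ with sufficiently large residue characteristic, $Z$ contains a zero-cycle of degree $1$ if and only if the gcd of the degrees of its closed points equals $1$, and by Lang--Weil and Hensel this is controlled by the combinatorics of the special fibre of a suitable model of $Z$. The $k_v$-surjectivity result of \cite{pseudo-split} rested on the existence of a Frobenius-fixed geometrically irreducible component of multiplicity $1$ (a split component), whereas the zero-cycle analogue should ask only for a weighted gcd over Galois orbits of components to equal $1$, possibly refined by the incidence data of the dual complex; this is what combinatorial cycle-splitness should encode.

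For (ii) $\Rightarrow$ (i) we would first reduce, via a suitable modification and embedded resolution, to a form in which $f$ admits a model $\CX \to \CY$ over $\Spec \CO_{k,S}$ whose $f$-vertical locus is a strict normal crossings divisor. Then, given $v \notin S$ and $P \in Y(k_v)$, the reduction $\ov P$ lies on a unique minimal stratum of the boundary of $\CY$, and the components of $f^{-1}(\ov P)$ together with their multiplicities and the action of $\Frob_v$ are governed by the dual complex of the nearest codimension $1$ fibre $f'^{-1}(\vartheta')$. Combinatorial cycle-splitness translates on each such stratum into a gcd-one condition on products of multiplicities and residue-degree extensions over the Galois orbits of components meeting there. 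Lang--Weil estimates produce closed points of controlled residue degrees in each component for all $\N v \gg 0$, Hensel's lemma lifts them to the smooth locus of $f^{-1}(P)$, and an appropriate $\BZ$-linear combination gives the desired zero-cycle of degree $1$.

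For (i) $\Rightarrow$ (ii) we argue by contrapositive. If some modification $f':X' \to Y'$ has a fibre $f'^{-1}(\vartheta')$ which is not combinatorially cycle-split, then the obstruction should be witnessed at some combinatorial stratum where every integral combination of multiplicities weighted by residue degrees over Galois orbits shares a prime factor $\ell$. Applying Chebotarev density over the function field of $\ov{\vartheta'}$, as in \cite{pseudo-split}, we realise the required Frobenius conjugacy classes at infinitely many places $v$ and produce points $P \in Y'(k_v)$ specialising to that stratum, whose fibres $f'^{-1}(P)$ carry only closed points of degree divisible by $\ell$; pushing this forward through the modification contradicts (i). The main difficulty will be pinning down the right formulation of combinatorial cycle-splitness: unlike the pseudo-split criterion, which is witnessed by a single component, the zero-cycle condition must couple intersection combinatorics with the arithmetic of component residue fields simultaneously across all strata, and extracting an honest arithmetic obstruction at infinitely many places from a purely combinatorial failure is the technical heart of the argument.
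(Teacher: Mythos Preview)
Your overall strategy—adapt the Loughran--Skorobogatov--Smeets argument, replacing pseudo-splitness by a $\gcd$-condition on Galois orbits of components weighted by multiplicity—is correct in outline, and your treatment of (i) $\Rightarrow$ (ii) via Chebotarev is essentially what the paper does. However, two genuine gaps remain.

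First, you do not address birational invariance, which the paper singles out as ``more subtle than in the case of rational points.'' For (i) $\Rightarrow$ (ii) you need that if $f$ is arithmetically cycle-surjective then so is every modification $f'$, and your phrase ``pushing this forward through the modification'' elides exactly this point. Unlike a rational point, a zero-cycle of degree $1$ in a fibre does not automatically transfer across a proper birational map of total spaces. The paper resolves this (Lemma~\ref{lem:birational}) by a topological closure argument inside $Y(\ov{k_v})$, but that argument only works once one has a \emph{uniform} bound $B$ on $\maxdeg Z$ for the witnessing zero-cycles; establishing this bound (via Lang--Weil in the form of Lemma~\ref{lem:variety-cycle-split}) is an essential and nontrivial ingredient absent from your sketch.

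Second, your account of (ii) $\Rightarrow$ (i) underestimates the difficulty of deep strata. You assert that for $\ov P$ in a high-codimension stratum the fibre is ``governed by the dual complex of the nearest codimension $1$ fibre,'' and you speculate that combinatorial cycle-splitness must therefore encode incidence data across all strata. Neither matches the paper. The definition (Definition~\ref{def:general-index}) involves only the Galois action on irreducible components and their geometric multiplicities—no dual-complex combinatorics. The reason this simple criterion suffices is that the paper, following Denef and \cite{pseudo-split}, works in logarithmic geometry: one passes to a log smooth model, bounds the logarithmic height of the points that must be checked (Proposition~\ref{prop:intersection-bound}, via the logarithmic Hensel lemma applied twice), and then performs enough barycentric log blow-ups (Proposition~\ref{prop:ultimate-mod}) so that every relevant $\wt y$ meets the boundary divisor \emph{transversally}. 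This reduces everything to codimension-$1$ strata, where the plain Galois-orbit criterion applies. Your SNC-plus-ordinary-Hensel approach does not yield this reduction; without it you would indeed be forced into a stratum-by-stratum criterion, but that is not the criterion the theorem actually uses.
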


A situation where Theorem\nobreakspace \ref {thm:main} applies but not Theorem\nobreakspace \ref {thm:lss} is given at the end of this article in Example\nobreakspace \ref {ex:cycle-surjective}.

Note that we do not naively ask for surjectivity on zero-cycles but only for zero-cycles that are each entirely contained in a fibre. This has three reasons. First, if we allowed for zero-cycles whose summands lie in several distinct fibres, the question would not be fibre-wise anymore and our tools would not suffice to provide an answer for $\dim Y>1$. Secondly, the naive version is not very well-behaved even in dimensions $0$ and $1$, which we can handle, where it already leads to rather complicated criteria.

Thirdly, it can be argued that the problem as posed above arises more naturally, for example when considering Artin's conjecture on $p$-adic forms in its variant for zero-cycles of degree $1$.
\begin{conjecture}[e.g.\ {\cite[Problem 3]{kato-kuzumaki}}]\label{conj:artin}
 If $p$ is an arbitrary prime and if $n$ and $d$ are positive integers such that $n\geq d^2$, then a degree $d$ hypersurface in $\BP^n_{\BQ_p}$ has a zero-cycle of degree $1$.
\end{conjecture}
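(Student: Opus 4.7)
The plan is to reformulate the existence of a degree-$1$ zero-cycle prime-by-prime and tackle each prime with techniques tailored to the residue characteristic. Writing $X\subset\BP^n_{\BQ_p}$ for the hypersurface, the degree map on zero-cycles has image a subgroup of $\BZ$, so a zero-cycle of degree $1$ exists if and only if for every prime $\ell$ there is a closed point of $X$ of degree coprime to $\ell$. Equivalently, for every prime $\ell$ one needs a finite extension $L/\BQ_p$ with $[L:\BQ_p]$ coprime to $\ell$ such that $X(L)\neq\emptyset$.

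For primes $\ell\neq p$ I would take $L$ to be an unramified extension of degree $f$ coprime to $\ell$. The reduction is a degree-$d$ form in $n+1\geq d^2+1>d+1$ variables over $\BF_{p^f}$, so Chevalley-Warning guarantees a nontrivial zero; a standard point-count then shows that for $f$ sufficiently large the smooth locus of $X_{\BF_{p^f}}$ contains an $\BF_{p^f}$-point (since the singular locus of a degree-$d$ hypersurface in $\BP^n$ contributes $O(p^{f(n-2)})$ points while the total count grows like $p^{f(n-1)}$), and Hensel's lemma lifts it to an $L$-point. Since infinitely many $f$ are coprime to any fixed $\ell\neq p$, this disposes of every prime except $\ell=p$.

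The hard case is $\ell=p$, and this is where the bound $n\geq d^2$ must really be used. The natural framework is that of Kato and Kuzumaki, which reformulates the conjecture as a statement about the unramified Milnor $K$-cohomological dimension of $\BQ_p$. The inductive strategy I would pursue is to exhibit, for each $d$, linear subvarieties $\Lambda\subset X$ defined over a \emph{tame} extension $L/\BQ_p$ of degree coprime to $p$, controlling $[L:\BQ_p]$ via norm forms of auxiliary degree smaller than $d$; provided such a $\Lambda$ already carries a degree-$1$ zero-cycle over $L$, pushing forward along $L/\BQ_p$ yields one on $X$. The base step uses the classical fact that quadrics in $\geq 5$ variables ($d=2$) and cubics in $\geq 10$ variables ($d=3$, Demyanov-Lewis) have $\BQ_p$-rational points, and one proceeds by induction on $d$.

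The main obstacle, and the reason the conjecture remains open for $d\geq 4$, is the wildly ramified regime $p\leq d$: Chevalley-Warning and Lang's theorem on $C_1$-fields are unavailable over ramified $p$-adic extensions, and Ax-Kochen-style transfer to equal characteristic only works for $p$ sufficiently large relative to $d$. Any complete proof will need genuinely new input on the Milnor $K$-theory of $\BQ_p$ in the wild regime, or a geometric construction of enough tame subvarieties to reduce the problem to already-solved cases; I do not presently see how to supply either.
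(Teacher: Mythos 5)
This statement is a \emph{conjecture}, not a theorem: the paper explicitly labels it as such, describes it as ``open,'' and in the remark following Theorem~5.16 acknowledges that its own methods cannot settle it, because Theorem~\ref{thm:main} only controls all but a finite (and uncontrolled) set $S$ of places, whereas Conjecture~\ref{conj:artin} demands $S=\emptyset$. So there is no proof in the paper for you to have matched, and your honest admission at the end that you ``do not presently see'' how to supply the missing input is exactly the right conclusion; presenting this as a ``proof proposal'' overstates what it is. Your reduction---a zero-cycle of degree $1$ exists if and only if for every prime $\ell$ there is a closed point of degree prime to $\ell$---is correct and standard, and your identification of $\ell=p$ as the genuinely open case is also correct (this is essentially the $C_2^0$-property of $p$-adic fields in the sense of Kato--Kuzumaki; Wittenberg's cited paper establishes the different $C_1^1$-property, not this one).

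There is, however, a concrete gap in your treatment of the easy case $\ell\neq p$. You argue that the reduction of $X$ modulo $p$ has a smooth $\BF_{p^f}$-point for $f\gg 0$ because ``the singular locus contributes $O(p^{f(n-2)})$ points.'' This fails for bad reduction: after scaling $F$ to have $\BZ_p$-coefficients with some unit coefficient, the reduction $\bar F$ can be a $d$-th power of a linear form, in which case the special fibre is a thickened hyperplane with \emph{empty} smooth locus, and Hensel's lemma as you invoke it does not apply. The standard fix is to work with a suitable proper regular model (or a resolution/alteration of $X$) and use Lemma~\ref{lem:generalised-hensel}-type lifting from a regular point of the model lying on a multiplicity-one, geometrically irreducible component of its special fibre; the Lang--Weil count then goes through on that model. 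So the $\ell\neq p$ case is indeed known, but not by the unadorned Chevalley--Warning argument you sketched, and your write-up should flag this.
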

In other words, this open conjecture posits that the famous Ax-Kochen theorem, a special application of Theorem\nobreakspace \ref {thm:lss}, holds without the need to exclude any primes when restated for zero-cycles of degree $1$. In moduli terms, this asks for fibre-wise $p$-adic zero-cycles of degree $1$ in the universal family of such hypersurfaces for every prime $p$.

\subsection{Notation and conventions} By a variety, we mean a separated scheme of finite type over a field $K$. We denote by $\ov X$ the base change of a variety $X$ along an algebraic closure $\ov K$ of $K$. For a field $K'\supset K$, we write $X_{K'}$ for $X\times_K K'$. If $k$ is a number field and $S$ a finite set of finite places in $k$, we write $\CO_k$ for the ring of integers of $k$ and $\CO_{k,S}$ for the $S$-integers of $k$. Furthermore, for a finite place $v$ of $k$, $k_v$ shall denote the completion at $v$ with ring of integers $\CO_{k_v}$ and residue field $k(v)$ of size $N(v)$.

By a model of a variety $X$ over $k$ (respectively $k_v$), we mean a scheme $\CX$ which is flat and of finite type over $\CO_{k,S}$ for some finite set of places $S$ (respectively $\CO_{k_v}$) together with an isomorphism of its generic fibre to $X$. If $X$ is proper, $\CX$ a fixed model of $X$ and $x\in X$ is a closed point, we write $\wt x$ for the closure of $x$ in $\CX$. By a model of a morphism of varieties $f:X\to Y$ over $k$ (respectively $k_v$), we mean a morphism $\CF:\CX\to\CY$ over $\CO_{k,S}$ (respectively $\CO_{k_v}$) such that $\CX$ and $\CY$ are models of $X$ and $Y$ compatible with $f$ in the obvious way.

\section{Preliminary definitions}\label{sec:term}
To start, we introduce some terminology related to zero-cycles and our question.
\begin{definition}
 A variety over a field $K$ is \emph{$r$-cycle-split}, if it contains a zero-cycle of degree $r$ which is the sum of smooth points.
 
 A variety over a number field $k$ is \emph{locally $r$-cycle-split} outside a finite set of places $S$, if for all finite places $v\notin S$ of $k$, the base change $X_{k_v}$ is $r$-cycle-split.
\end{definition}

\begin{definition}\label{def:cycle-surjective}
 A morphism of varieties over a field $K$ is \emph{$r$-cycle-surjective}, if the fibre over any rational point contains a zero-cycle of degree $r$.
 
 A morphism of  varieties over a number field $k$ is \emph{arithmetically $r$-cycle-surjective} outside a finite set of places $S$, if for all finite places $v\notin S$ of $k$, the base change $f\times_k{k_v}$ is $r$-cycle-surjective.
\end{definition}

In the case $r=1$, we propose the easier terminology \emph{cycle-split}, \emph{locally cycle-split}, \emph{cycle-surjective} and \emph{arithmetically cycle-surjective}. Although Theorem\nobreakspace \ref {thm:main} is only concerned with arithmetic cycle-surjectivity, dealing with the case of general $r$ does not add further complications. The terms are chosen in relation to \cite{pseudo-split}.

It turns out to be important to bound the degree of points appearing in zero-cycles.
\begin{definition}
 For a zero-cycle $Z=\sum n_i x_i$ on a variety over a field $K$, define the \emph{maximum degree} of $Z$
 \[\maxdeg Z=\max [K(x_i):K],\]
 where $K(x_i)$ is the residue field of the point $x_i$.
\end{definition}
We will make crucial use of a uniform version of the Lang-Weil estimates \cite{lang-weil}.
\begin{lemma}\label{lem:lang-weil}
 There exists a function $\ov \Phi:\BN^3\to\BN$ with the following property. Let $U\subset \BP^\nu$ be a geometrically irreducible, quasi-projective variety over a finite field, $\ov U$ its closure in $\BP^\nu$ and $\partial U=\ov U\setminus U$.
 
 Then there exists a zero-cycle $Z$ of degree $1$ on $U$ with \[\maxdeg Z\leq \ov\Phi(N,\deg \ov U, \deg\partial U).\]
\end{lemma}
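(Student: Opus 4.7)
The plan is to find two closed points $x, y \in U$ whose residue-field degrees $m = [k(x):\BF_q]$ and $m' = [k(y):\BF_q]$ over the base field $\BF_q$ (of size $N = q$) are coprime. By Bézout one then obtains integers $a,b$ with $am+bm'=1$, so the zero-cycle $a[x]+b[y]$ has degree $1$ and $\maxdeg \leq \max(m,m')$. The cleanest way to ensure coprimality is to take two consecutive integers, so my target is to produce closed points of degrees $n$ and $n+1$ for some $n$ bounded in terms of the allowed data.

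The main input is the Lang--Weil estimate \cite{lang-weil} applied to $\ov U$: for a geometrically irreducible projective variety of dimension $d$ and degree $D$ over $\BF_q$,
\[\#\ov U(\BF_{q^n}) = q^{nd} + O\bigl(q^{n(d-1/2)}\bigr),\]
with implied constant depending only on $d$ and $D$. Since $\partial U$ has dimension at most $d-1$ and degree at most $\deg\partial U$, the crude bound $\#\partial U(\BF_{q^n}) = O_{\deg\partial U}(q^{n(d-1)})$ suffices to give
\[\#U(\BF_{q^n}) = q^{nd} + O(q^{n(d-1/2)}),\]
with implicit constants depending only on $d$, $D$ and $\deg\partial U$. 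The case $d=0$ is trivial, because geometric irreducibility forces $U=\Spec\BF_q$.

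For $d\geq 1$ I would M\"obius-invert to count closed points $a_n$ of exact degree $n$ on $U$,
\[n\cdot a_n = \sum_{k\mid n}\mu(n/k)\,\#U(\BF_{q^k}).\]
The proper-divisor terms contribute at most $\tau(n)\cdot O(q^{nd/2})$, which is overwhelmed by the main term $q^{nd}$ once $n$ exceeds an explicit threshold $n_0$ depending on $q,d,D,\deg\partial U$. Hence $a_{n_0}>0$ and $a_{n_0+1}>0$, producing the required pair of coprime-degree closed points, and we set $\ov\Phi(N,D,\deg\partial U):=n_0+1$.

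The hard part will be to absorb the dependence on the dimension $d$ and the embedding dimension $\nu$ into the three permitted arguments of $\ov\Phi$. The embedding dimension $\nu$ drops out under a generic linear projection preserving the degree. The dimension $d$ can be eliminated either by invoking refined versions of Lang--Weil (e.g.\ Ghorpade--Lachaud or Cafure--Matera) whose implicit constants depend only on $D$, or by inducting on $d$ via generic hyperplane-section arguments — a generic hyperplane section of $\ov U$ remains geometrically irreducible of degree $D$ and dimension $d-1$ once $d\geq 2$, while its intersection with $\partial U$ retains bounded degree — so that in either case the threshold $n_0$ becomes a function of $q$, $D$ and $\deg\partial U$ alone.
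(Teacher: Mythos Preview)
The paper does not actually prove this lemma: it is stated without proof, prefaced only by the sentence ``We will make crucial use of a uniform version of the Lang--Weil estimates \cite{lang-weil}.'' There is therefore nothing on the paper's side to compare your argument against; the author treats the statement as a known consequence of Lang--Weil.

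Your approach is correct, though the M\"obius-inversion step is unnecessary. You do not need closed points of \emph{exact} degrees $n_0$ and $n_0+1$: it suffices that $U(\BF_{q^{n_0}})$ and $U(\BF_{q^{n_0+1}})$ are both non-empty, since any resulting closed points then have degrees dividing $n_0$ and $n_0+1$ respectively, hence automatically coprime. This removes the divisor-sum bookkeeping entirely and lets you work directly with the Lang--Weil point count rather than its inversion. Your discussion of absorbing the dependence on $d$ and $\nu$ is reasonable; note in addition that once one has a threshold $q_0$ (depending on $\nu$, $D$, $\deg\partial U$) such that $U$ has a rational point whenever the ground field has more than $q_0$ elements, the extension degree needed for smaller $q$ is at most $\lceil\log_2 q_0\rceil$, which already yields uniformity in $q$ without further argument. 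Finally, be aware that the symbol $N$ in the lemma is not explained in the paper and may well denote the embedding dimension $\nu$ rather than the field size --- in which case your last paragraph would simplify, since you would no longer need to eliminate the dependence on $\nu$.
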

If $X$ is proper and $\iota:X\dashrightarrow\BP^\nu$ a rational embedding defined on an open $U\subset X$, then we will write $\ov\Phi(\iota)$ for $\ov\Phi(N,\deg \ov{\iota(U)}, \deg\partial(\iota(U)))$.

\section{Combinatorial cycle-splitness}
We define the notion of combinatorial cycle-splitness, first for algebras and then for varieties.

\subsection{In dimension \texorpdfstring{$0$}{0}}\label{sub:base0}
Let $X$ be a finite étale scheme scheme over a field $K$. It can be written as $X=\Spec(A)$ for some finite $K$-algebra $A=\oplus_{i=1}^n K_i$ (where $K_i/K$ are finite field extensions but not necessarily normal). Let the Galois extension $L/K$ be the compositum of the Galois closures of the $K_i$ and denote $G:=\Gal(L/K)$.

Let $H_i:=\Gal(L/K_i)$, i.e.\ $L^{H_i}=K_i$. We note that $X$ has a global zero-cycle of degree $r$, if and only if $\gcd_i(\#G/\#H_i)|r$. An element $g\in\Gal(L/K)$ acts on the set $G/H_i$ of right cosets from the right and partitions it into $r_i$ orbits of sizes which we denote by $m_{i1}^g,\dots,m_{ir_i}^g$.

\begin{definition}
 Define the \emph{combinatorial index of $X$ at $g\in G$} as \[I_X(g):=\gcd_{i,j}(m_{ij}^g).\] We call $X$ \emph{combinatorially $r$-cycle-split} if and only if $I_X(g)|r$ for all $g\in G$. If $r=1$, we say $X$ is \emph{combinatorially cycle-split}.
\end{definition}

For the rest of this section, we take $K$ to be a number field $k$. With notation as above, the extension $L/k$ is unramified outside a finite set of places $S$. A finite place of $k$ that is unramified in all $K_i$ is also unramified in $L$. For a finite place $v\notin S$, let $\Frob_v\in G$ be the Frobenius automorphism at $v$.

\begin{lemma}
 Let $v\notin S$ be a finite place of $k$.  Then
 \[A\otimes k_v=\bigoplus_{i=1}^n\bigoplus_{j=1}^{r_i}k_{ij}\]
 where $k_{ij}/k_v$ is a finite extension of degree $m_{ij}^{\Frob_v}$.
\end{lemma}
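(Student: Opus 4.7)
The plan is to reduce to the standard description of prime decomposition in a finite separable extension via the double-coset formula, then translate the result into the combinatorics set up in Section~\ref{sub:base0}.

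Since $A=\bigoplus_{i=1}^n K_i$ and tensor products commute with direct sums, I first write
\[A\otimes_k k_v=\bigoplus_{i=1}^n K_i\otimes_k k_v,\]
so it suffices to show that for each $i$ one has $K_i\otimes_k k_v\cong\bigoplus_{j=1}^{r_i} k_{ij}$ with $[k_{ij}:k_v]=m_{ij}^{\Frob_v}$. Because $v\notin S$, the Galois extension $L/k$ is unramified at $v$, hence so is the subextension $K_i\subset L$. Consequently $K_i\otimes_k k_v\cong\prod_{w\mid v} K_{i,w}$ is a product of unramified local fields, and $[K_{i,w}:k_v]$ equals the residue degree $f(w|v)$.

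Next I would invoke the standard Galois-theoretic description of the places above $v$. Fix a place $\widetilde v$ of $L$ lying above $v$; its decomposition group $D\subset G$ is cyclic, generated by $\Frob_v$. The classical double-coset formula identifies the set of places of $K_i$ above $v$ with $H_i\backslash G/D$, equivalently with the set of orbits of $D=\langle\Frob_v\rangle$ acting on $G/H_i$ in the manner of Section~\ref{sub:base0}. Under this bijection, the place $w$ corresponding to an orbit of size $m$ satisfies $f(w|v)=m$, because $L_{\widetilde v}/k_v$ is unramified with Galois group $D$ and $K_{i,w}$ is the fixed subfield of a subgroup of $D$ of index $m$. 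Each such orbit size is by definition one of the integers $m_{ij}^{\Frob_v}$, which yields the claim.

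The only delicate point is checking that the conventions match: one must verify that the right $G$-action on cosets used in Section~\ref{sub:base0} agrees with the one appearing in the double-coset formula above, and that the orbit sizes are independent of the choice of $\widetilde v$ (they are, since replacing $\widetilde v$ conjugates $D$ and the coset decomposition simultaneously, preserving orbit lengths). Once this bookkeeping is done, the lemma is a direct application of the theory of unramified places in Galois extensions of number fields.
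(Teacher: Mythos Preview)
Your argument is correct and is precisely the content of the result the paper cites (Theorem~33 in Marcus, \emph{Number Fields}): the paper's own proof is simply a one-line reference to that theorem, and you have unpacked it via the double-coset description of primes in an intermediate field of an unramified Galois extension. There is no substantive difference in approach, only in level of detail.
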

\begin{proof}
 This is \cite[Theorem 33]{marcus}.
\end{proof}

Note that the list of orbit sizes really only depends on the conjugacy class of $g$: the size of the orbit of $H_it$ under $g$ is the smallest integer $j$ such that $tg^j\in H_it$, or equivalently $g^j\in t^{-1}H_it$.

\begin{corollary}
 Let $X$ be a finite étale scheme over a number field $k$ and $S$ a finite set of places such that $L/k$ is unramified outside $S$. For a finite place $v\notin S$, $X_{k_v}$ is $r$-cycle-split, if and only if $\gcd_{i,j}(m_{ij}^{\Frob_v})|r$.
\end{corollary}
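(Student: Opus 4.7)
The plan is to apply the preceding lemma directly and reduce the claim to an elementary Bezout argument. First I would observe that since $X$ is finite étale over $k$, the base change $X_{k_v}$ is finite étale over $k_v$, hence smooth; in particular every closed point of $X_{k_v}$ is smooth, so the smoothness condition in the definition of $r$-cycle-split is automatic, and being $r$-cycle-split simply amounts to the existence of a zero-cycle of degree exactly $r$.

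By the preceding lemma, the closed points of $X_{k_v}$ are precisely the $\Spec k_{ij}$, with residue field degrees $m_{ij}^{\Frob_v}$ over $k_v$. Any zero-cycle on $X_{k_v}$ can therefore be written as $\sum_{i,j} n_{ij} [\Spec k_{ij}]$ for integers $n_{ij}$, and its degree is $\sum_{i,j} n_{ij} m_{ij}^{\Frob_v}$. Thus the set of degrees realised by zero-cycles on $X_{k_v}$ is exactly the subgroup of $\BZ$ generated by the $m_{ij}^{\Frob_v}$.

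By Bezout's identity, $r$ lies in this subgroup if and only if $\gcd_{i,j}(m_{ij}^{\Frob_v}) \mid r$, which is the claimed equivalence. There is no genuine obstacle here: once the decomposition of $A\otimes k_v$ provided by the preceding lemma is in hand, the corollary is essentially bookkeeping on top of the lemma, and so I would expect the written proof to consist of little more than citing the lemma and invoking Bezout.
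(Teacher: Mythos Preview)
Your argument is correct and is precisely what the paper has in mind: the corollary is stated without proof, being treated as an immediate consequence of the preceding lemma together with the elementary observation that the degrees of zero-cycles on $X_{k_v}$ form the subgroup $\sum_{i,j}\BZ\, m_{ij}^{\Frob_v}$, which contains $r$ iff $\gcd_{i,j}(m_{ij}^{\Frob_v})\mid r$.
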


\begin{corollary}
 Let $X$ be a finite étale scheme over a number field $k$ and $S$ a finite set of places such that $L/k$ is unramified outside $S$. Then $X$ is locally cycle-split outside $S$, if and only if $X$ is combinatorially $r$-cycle-split.
\end{corollary}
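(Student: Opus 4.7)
The plan is to combine the preceding corollary with Chebotarev's density theorem and the class-function property of the combinatorial index.

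First I would record the easy direction: if $X$ is combinatorially $r$-cycle-split, then $I_X(g)\mid r$ for every $g\in G$, and in particular $I_X(\Frob_v)\mid r$ for every finite place $v\notin S$, so by the previous corollary $X_{k_v}$ is $r$-cycle-split for all $v\notin S$.

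For the converse, suppose $X$ is locally $r$-cycle-split outside $S$. By the previous corollary, $\gcd_{i,j}(m_{ij}^{\Frob_v})=I_X(\Frob_v)$ divides $r$ for every $v\notin S$. The remark preceding the corollary observes that the multiset $\{m_{ij}^g\}$ depends only on the conjugacy class of $g$ in $G$, so $I_X$ is a class function on $G$. Now I would invoke the Chebotarev density theorem, which guarantees that for every conjugacy class $C\subset G$ there exist infinitely many finite places $v\notin S$ with $\Frob_v\in C$. Picking, for an arbitrary $g\in G$, such a place $v$ in the conjugacy class of $g$ gives $I_X(g)=I_X(\Frob_v)\mid r$, establishing combinatorial $r$-cycle-splitness.

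The only mildly delicate point is making sure the statement one feeds into Chebotarev matches what has been set up: $L/k$ is Galois (as the compositum of the Galois closures of the $K_i$) and unramified outside $S$, so the Frobenius conjugacy classes $\{\Frob_v\}_{v\notin S}$ are well-defined elements of $\mathrm{Conj}(G)$ and cover all of $\mathrm{Conj}(G)$ by Chebotarev. No serious obstacle arises; the content of the corollary is essentially just the translation of a local splitting condition at all unramified places into a condition on all group elements via Chebotarev, once the class-function observation is in place.
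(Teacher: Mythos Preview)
Your proof is correct and essentially the same as the paper's: both directions use the previous corollary, and the nontrivial direction invokes Chebotarev density together with the observation that $I_X$ depends only on conjugacy classes. The only cosmetic difference is that the paper phrases the converse contrapositively (if $X$ is not combinatorially $r$-cycle-split, pick $g$ with $I_X(g)\nmid r$ and a $v\notin S$ with $\Frob_v$ conjugate to $g$), while you argue directly.
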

\begin{proof}
 One direction directly follows from the previous corollary. The other direction follows because by Cebotarev density, for every conjugacy class $C\subseteq G$, there exist infinitely many places $v$ with $\Frob_v\in C$. Hence, if $X$ is not combinatorially $r$-cycle split there exists a $v\notin S$ such that $\gcd_{i,j}(m_{ij}^{\Frob_v})\nmid r$.
\end{proof}

\begin{example}\label{ex:upgrade}
 The preceding corollary gives a very explicit condition that can be explicitly checked for a finite group $G$. One example of an everywhere locally cycle-split scheme that is not cycle-split is \[\Spec(k[t]/(t^2-a)(t^2-b)(t^6-ab))\] with $a,b,a/b\notin k^2$. In the case where $a$ or $b$ is a square in $k_v$, the scheme has a rational point. If $v$ does not lie over $2$, $a,b\in \CO_{k_v}^\times$ and neither $a$ nor $b$ are squares in $k_v$, then $ab$ is a square in $k_v$ and we get $k_v$-points of degree $2$ and $3$, hence a zero-cycle of degree $1$. 
 
 In fact, this is a ``modification'' of an example by Colliot-Thélène for non-split pseudo-splitness where the exponent $6$ is replaced by $2$ \cite[4.1]{squaretrick}. 
\end{example}

\begin{example}\label{ex:non-upgrade}
 Take \[X=\Spec(\BQ[t]/(t^2+1)(t^6-3t^2-1))\] which has a local zero-cycle of degree $1$ everywhere.
 The second factor $(t^6-3t^2-1)$ is an irreducible polynomial that is everywhere reducible. This is because its non-cyclic Galois group is $A_4$, of which a subgroup of order $2$ leaves $\BQ[t]/(t^6-3t^2-1)$ fixed. Moreover, due to the absence of subgroups of order $6$ in $A_4$, locally there always is a factor of order dividing $3$ which together with $(t^2+1)$ yields a zero-cycle of degree $1$. 
 
 Moreover, $X$ is not a finite cover of a non-split pseudo-split scheme $X'$ over $\BQ$ as in Example~\ref{ex:upgrade}. This is because $A_4$ is the smallest counterexample to the converse Lagrange's theorem and thus one sees that any proper quotient of $\BZ/2\times A_4$ fails to satisfy even the group theoretic condition for combinatorial cycle-splitness.
\end{example}

It is a curious result that there is no connected example ($n=1$) as the following theorem shows.
\begin{theorem}
 The Hasse principle for zero-cycles of degree $1$ holds for connected, reduced zero-dimensional schemes over a number field $k$. 
\end{theorem}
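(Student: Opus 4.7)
The assertion reduces to a purely group-theoretic statement. A connected, reduced, zero-dimensional scheme over $k$ has the form $X=\Spec K$ for a single finite extension $K/k$, and since $X$ has a unique closed point with residue field $K$, every zero-cycle on $X$ is a $\BZ$-multiple of that point, so a global zero-cycle of degree $1$ exists if and only if $[K:k]=1$. Thus it suffices to show: if $K\neq k$, then some finite place $v$ of $k$ fails to carry a $k_v$-adic zero-cycle of degree $1$ on $X_{k_v}$. Let $L$ be the Galois closure of $K/k$ and set $G=\Gal(L/k)$, $H=\Gal(L/K)$, so $H\subsetneq G$. By the discussion in Section~\ref{sub:base0}, an unramified place $v$ with $\Frob_v=g$ decomposes $K\otimes_k k_v$ into factors of degrees equal to the orbit sizes $m_j^g$ of $\langle g\rangle$ acting on $G/H$, and local cycle-splitness at $v$ is equivalent to $\gcd_j m_j^g=1$. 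Since Chebotarev realises every conjugacy class of $G$ as infinitely many $\Frob_v$, it suffices to exhibit a single $g\in G$ with $\gcd_j m_j^g>1$.

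I would obtain such a $g$ via the following sufficient condition: if $g\in G$ has prime power order $p^a$ and is not conjugate in $G$ to any element of $H$, then every orbit of $\langle g\rangle$ on $G/H$ has size divisible by $p$. Indeed, the stabiliser of $xH$ in $\langle g\rangle$ is $\langle g\rangle\cap xHx^{-1}$, which is a proper subgroup of the cyclic $p$-group $\langle g\rangle$ precisely because $g\notin xHx^{-1}$; its index is therefore a positive power of $p$.

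The main obstacle, and the reason the theorem is far from routine, is that producing such a $g$ for an arbitrary proper inclusion $H\subsetneq G$ of finite groups is the content of the Fein-Kantor-Schacher theorem from their work on relative Brauer groups, whose only known proof invokes the classification of finite simple groups. I would simply appeal to this theorem to conclude. The elementary Jordan lemma that $G$ is not a union of conjugates of $H$ only yields an element of $G$ with no fixed points on $G/H$, and such orbit sizes can be mutually coprime (for instance sizes $2$ and $3$) — in which case the corresponding local fibre still supports a degree $1$ cycle — so the deeper CFSG-based input appears genuinely necessary.
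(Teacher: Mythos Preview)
Your proposal is correct and follows essentially the same route as the paper: both reduce to finding $g\in G$ of prime-power order lying outside every conjugate of $H$, invoke the Fein--Kantor--Schacher theorem to produce it, and conclude that all orbits of $\langle g\rangle$ on $G/H$ have length divisible by $p$. Your additional remark that Jordan's lemma alone is insufficient is a helpful clarification not present in the paper.
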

\begin{proof}
As before, let $L/k$ be a finite non-trivial Galois extension with Galois group $G$ and $H\subsetneq G$ a proper subgroup. We want to show that $\Spec L^H$ is not locally $r$-cycle-split at infinitely many places. Equivalently, we want to find an element $g$ such that \[\gcd_{t\in G}\min\{k|g^k\in t^{-1}Ht\}>1.\]

To do this we use the following fact proven ``outrageous[ly]'' in \cite[Theorem 1]{fein} via the classification of finite simple groups: for a finite group $G$, there exists a prime number $p$ and an element $g\notin \bigcup_{t\in G}t^{-1}Ht$ of order a power of $p$. This is sufficient since then $p|\min\{k|g^k\in t^{-1}Ht\}$ for all $t\in G$.
\end{proof}
In more down-to-earth language, there is no irreducible polynomial over $k$ that factors into coprime degrees modulo almost all primes.

\subsection{In higher dimensions}\label{sub:dim0}
For the beginning of this section, let us again allow $K$ to be any field. Let $X$ be a proper variety over $K$. For $X'$ a reduced, irreducible component of $X$, we define the \emph{(apparent) multiplicity} of $X'$ in $X$ as the length of the local ring $\CO_{X,\eta'}$ where $\eta'$ is the generic point of $X'$. We define the \emph{geometric multiplicity} of $X'$ in $X$ as the length of the local ring $\CO_{\ov X, \ov{\eta'}}$ where $\ov{\eta'}$ is a point of $\ov X $ lying over $\eta'$. If $X'$ is geometrically reduced, for example when $K$ is perfect, then multiplicity and geometric multiplicity coincide.

Let $X_1^m,\dots,X_n^m$ be the reduced, irreducible components of geometric multiplicity $m$ in $X$. Let $K_i$ be the separable closure of $K$ in the function field of $X_i^m$.

\begin{definition}
 Define the \emph{algebra of irreducible components of geometric multiplicity $m$} as $Z_X^m:=\Spec(\oplus_{i=1}^n K_i)$. (If there are no such components, then $Z_X^m$ is empty.)
\end{definition}

The reason for this definition is of course that the embedding of the ground field into the function field  of a scheme controls, to some extent, its geometric properties and thus we can reduce to the previous section. A scheme $T$ of finite type over $K$ is geometrically irreducible if and only if $T$ is irreducible and $K$ is separably closed in the function field of $T$ (see \cite[4.5.9]{ega4-2}).

However, using the functor of open irreducible components defined by Romagny we can obtain finer results. For a finite type morphism of schemes $T\to R$ with $R$ integral, let $\Irr^m_{T/R}$ be the subfunctor of $\Irr_{T/R}$ defined in \cite[Def. 2.1.1]{romagny} of open irreducible components of geometric multiplicity $m$. We recall that $\Irr_{T/R}$ parametrises open subschemes $U$ of an $R$-scheme $R'$ such that the geometric fibres of $U\times_R R'\to R'$ are interiors of irreducible components in the geometric fibres of $T\times_R R'\to R'$. Note that this is stable under base change and thus functorial because we use the geometric instead of the apparent multiplicity.

\begin{lemma}\label{lem:irr-functor}
  The functor $\Irr^m_{T/R}$ is representable over a dense open of $R$ by a finite étale cover.
\end{lemma}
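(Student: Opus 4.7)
My plan is to bootstrap from Romagny's representability theorem for the full functor $\Irr_{T/R}$ \cite{romagny} and then exhibit $\Irr^m_{T/R}$ as a union of connected components over a dense open of $R$. The reason this should work is that $\Irr^m_{T/R}$ is cut out of $\Irr_{T/R}$ by fixing the numerical invariant $m$, and such invariants are typically locally constant on the representing scheme after shrinking the base.

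Concretely, I would first shrink $R$ to a dense open so that $\Irr_{T/R}$ is represented by a finite étale scheme $I\to R$. Next, I pass to a connected finite étale (Galois) cover $R'\to R$ which trivialises $I$, i.e.\ $I\times_R R' = \bigsqcup_{i=1}^N R'$. Each factor of this disjoint union corresponds to a "universal" open irreducible component $U_i\subseteq T\times_R R'$ whose formation is, by construction, compatible with arbitrary base change, so that for every geometric point $\bar s$ of $R'$ the fibre $U_{i,\bar s}$ is the interior of one specific irreducible component of $(T\times_R R')_{\bar s}$. Its geometric multiplicity $m_i(\bar s)$ is then the length of the local ring of $(T\times_R R')_{\bar s}$ at the generic point of $U_{i,\bar s}$.

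The key step is that each function $\bar s\mapsto m_i(\bar s)$ is locally constant on $R'$ after a further shrinking. Let $\wt U_i$ denote the scheme-theoretic closure of $U_i$ in $T\times_R R'$. By generic flatness, after shrinking $R'$ I may assume both $\wt U_i\to R'$ and its boundary $\wt U_i\setminus U_i\to R'$ are flat. In this situation the length of the stalk at the generic point of $U_{i,\bar s}$ is a constructible function of $\bar s$, hence locally constant on some further dense open. Doing this simultaneously for the finitely many indices $i$ and descending back, $\Irr^m_{T/R}\times_R R'$ becomes the disjoint union of exactly those factors with $m_i=m$, which is tautologically finite étale over $R'$. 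Being finite étale descends along the finite étale cover $R'\to R$, yielding the claim.

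The main obstacle is the local constancy of the geometric multiplicity. The subtle point is that $m$ is defined via the generic point of an irreducible component of a geometric fibre, not via any globally defined subscheme of $T$, so one must pass through the auxiliary scheme-theoretic closure $\wt U_i$ and invoke generic flatness before any Chevalley-type constructibility statement can be applied. Romagny's formulation of $\Irr_{T/R}$ in terms of open irreducible components, which is base-change compatible precisely because geometric rather than apparent multiplicity is tracked, is what makes this manoeuvre go through.
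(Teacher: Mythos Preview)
Your argument is correct, but the paper's proof is organised differently and is somewhat more economical. Instead of first representing the full $\Irr_{T/R}$, trivialising it over a Galois cover, and then sorting components by multiplicity, the paper extracts the multiplicity-$m$ locus \emph{before} invoking Romagny: it takes $T'\subset T$ to be the reduced closure of the irreducible components of geometric multiplicity $m$ in the generic fibre $T_\eta$, observes that over a dense open of $R$ one has $\Irr^m_{T/R}=\Irr_{T'/R}$ (this is the spreading-out of geometric multiplicity, for which the paper cites \cite[Proposition 9.8.6]{ega4-3}), and then applies Romagny's theorem directly to $T'$. Your approach and the paper's rest on the same underlying fact---local constancy of geometric multiplicity after shrinking the base---but the paper packages it as a single EGA citation and avoids the auxiliary Galois cover and descent step. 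Your route has the virtue of making the mechanism (multiplicity as a locally constant function on the representing scheme of $\Irr_{T/R}$) more visible; the paper's route is shorter and keeps the argument on the level of functors.
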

\begin{proof}
  Let $\eta$ be the generic point of $R$ and $T'\hookrightarrow T\to R$ be the reduced closure of the irreducible components of geometric multiplicity $m$ in the fibre over $\eta$. Then after replacing $R$ with a dense open subscheme, we have that $\Irr^m_{T/R}=\Irr_{T'/R}$ because the geometric multiplicity of the fibre over $\eta$ spreads out to a dense open neighbourhood by \cite[Proposition 9.8.6]{ega4-3}.
  
  After further replacement of $R$ with a dense open subscheme, the functor $\Irr_{T'/R}$ is representable by a separated algebraic space which is finite étale over $R$ by \cite[2.1.2,2.1.3]{romagny}. However, by Knutson's representability criterion, this algebraic space over $R$ must in fact be a scheme (cf.\ \cite[Proof of Proposition 3.7]{smeets-loughran} for this last step).  
\end{proof}

\begin{lemma}\label{lem:irr-rep}
 The functor $\Irr^m_{X/K}$ is represented by $Z_X^m$.
\end{lemma}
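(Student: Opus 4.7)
The plan is to invoke Lemma~\ref{lem:irr-functor} with $T = X$ and $R = \Spec K$: since $\Spec K$ is zero-dimensional, the "dense open" appearing there is all of $\Spec K$, so $\Irr^m_{X/K}$ is represented by some finite étale $K$-scheme $Z$. As $Z_X^m$ is also finite étale over $K$, it will suffice by Galois descent to produce a $\Gal(\ov K/K)$-equivariant bijection between $Z(\ov K)$ and $Z_X^m(\ov K)$, after which $Z \cong Z_X^m$ as $K$-schemes.

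On one side, each $K_i/K$ is separable by construction, so $Z_X^m(\ov K) = \coprod_{i=1}^n \operatorname{Hom}_K(K_i, \ov K)$ with its natural Galois action by post-composition. On the other side, $Z(\ov K)$ is, by definition, the set of irreducible components of $X_{\ov K}$ of geometric multiplicity $m$. I would set up the bijection in the obvious way: every such component of $X_{\ov K}$ lies over a unique reduced irreducible component $X_i^m$ of $X$ (its generic point maps to a generic point of $X$ under the faithfully flat projection $X_{\ov K}\to X$), and conversely the decomposition $K_i \otimes_K \ov K \cong \prod_{\sigma} \ov K$ indexes the components of $(X_i^m)_{\ov K}$, since by \cite[4.5.9]{ega4-2} the field $K_i$ is separably closed in the function field of $X_i^m$. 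Compatibility with geometric multiplicity is tautological: the geometric multiplicity of $X_i^m$ is by definition the length of the local ring of $X_{\ov K}$ at any point lying over its generic point, i.e.\ the (apparent) multiplicity of the corresponding component of $X_{\ov K}$. Galois equivariance is equally formal: the action on components of $X_{\ov K}$ coming from permuting the second factor of $X \times_K \ov K$ translates under the bijection above into post-composition on embeddings $K_i \hookrightarrow \ov K$.

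The hard part, if any, is bookkeeping rather than genuine input: one must keep clear the distinction between the reduced component $X_i^m$ of $X$ and the various components of $(X_i^m)_{\ov K}$ lying over it, and verify that the multiplicity labelling is consistent across base change so that the subset of geometric multiplicity $m$ really is the one cut out by $Z_X^m$. Once this is done, the identification of $Z$ with $Z_X^m$ is pure Galois descent and the lemma follows.
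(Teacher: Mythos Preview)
Your argument is correct. The paper's own proof is a one-line citation to \cite[2.1.4]{romagny}, which presumably identifies the representing object of $\Irr_{T/K}$ over a field directly. Your route is different in flavour: you first invoke Lemma~\ref{lem:irr-functor} (applied to the zero-dimensional base $\Spec K$) to get abstract representability by \emph{some} finite étale $K$-scheme $Z$, and then pin down $Z\cong Z_X^m$ via the equivalence between finite étale $K$-schemes and finite continuous $\Gal(\ov K/K)$-sets, by matching $\ov K$-points equivariantly. This is more self-contained and makes the Galois bookkeeping explicit, at the cost of relying on Lemma~\ref{lem:irr-functor} (hence on \cite[2.1.2, 2.1.3]{romagny}) rather than the single result \cite[2.1.4]{romagny}; there is no circularity since Lemma~\ref{lem:irr-functor} does not use Lemma~\ref{lem:irr-rep}. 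The one point worth tightening is that $Z(\ov K)$ is literally the set of \emph{interiors} of irreducible components of $X_{\ov K}$ of the given multiplicity (as open subschemes), not the components themselves, but the bijection with components is immediate.
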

\begin{proof}
 This follows from \cite[2.1.4]{romagny}.
\end{proof}

\begin{definition}\label{def:general-index}
 Let $G$ be the Galois group defined in subsection \ref{sub:dim0} for the finite étale $K$-scheme $Z_X^m$. Define the \emph{combinatorial index of $X$ at $g\in G$} as \[I_X(g):=\gcd_m(mI_{Z_X^m}(g)).\] We call $X$ \emph{combinatorially $r$-cycle-split} if and only if $I_X(g)|r$ for all $g\in G$. If $r=1$, we say $X$ is \emph{combinatorially cycle-split}.
\end{definition}
This is compatible with the previous definition of combinatorial index in dimension $0$ and only depends on the conjugacy class of $g$ in $G$.

Let us return to the case of $k$ a number field and assume $X$ is smooth and proper over $k$. Let $v$ be a finite place of $k$. To tackle the question of zero-cycles on $X_{k_v}$, we need to relate closed points in the special and generic fibres of a model. This seems to be folkloric knowledge partly written down in \cite[\S9, Cor. 9.1]{blr} but the author could not find a complete reference before \cite{bosch-liu} (see also \cite[4.6]{wittenberg-c11} and \cite[2.4]{kesteloot}).

\begin{lemma}\label{lem:generalised-hensel}
 Let $\CX$ be a proper, flat model over $\CO_{k_v}$ of $X_{k_v}$. Let $\ov x\in \CX(k(v))$ be a point which is regular in $\CX$ and regular in the reduction $\CX_{k(v)}$ and lies on a geometrically irreducible component of $\CX_{k(v)}$ of multiplicity $m$. Then there exists a closed point $x\in X_{k_v}$ of degree $m$ with reduction $\ov x$.
\end{lemma}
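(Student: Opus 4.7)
The plan is to work locally at $\ov x$ in the regular local ring $R := \CO_{\CX,\ov x}$, which has dimension $d+1$ where $d = \dim X_{k_v}$. I first interpret the hypothesis that $\ov x$ is regular in $\CX_{k(v)}$ as a regularity condition on the reduced special fibre (the only sensible reading when $m>1$, since higher multiplicity prevents $\CX_{k(v)}$ itself from being reduced at $\ov x$): the unique irreducible component through $\ov x$ is such that $R/\sqrt{\pi R}$ is regular of dimension $d$. Hence $\sqrt{\pi R}$ is a prime $\mathfrak{p}\subset R$, and since $R$ is regular, $\mathfrak{p}=(t)$ for some $t\in\mathfrak{m}$ that extends to a regular system of parameters $t,t_1,\ldots,t_d$. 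As $R$ is a UFD with $t$ the unique prime factor of $\pi$, I obtain the factorisation
\[\pi = u\,t^m,\qquad u\in R^\times.\]

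Next I cut down to dimension one by setting $S := R/(t_1,\ldots,t_d)$, a quotient by $d$ elements of a regular system of parameters. This is a regular local ring of dimension one, i.e.\ a DVR with uniformiser $t$ and residue field $k(v)$, in which $\pi = u t^m$ continues to hold with $u \in S^\times$.

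The main obstacle is to show that $\operatorname{Frac}(S)$ is a degree-$m$ extension of $k_v$; this is nontrivial because $S$ is only \emph{essentially} of finite type over $\CO_{k_v}$ and need not be finite a priori. I address this by passing to the completion $\hat S$, which is a complete DVR with uniformiser $t$, residue field the \emph{finite} field $k(v)$, and $v(\pi)=m$. Since each quotient $\hat S/t^n\hat S$ is finite, $\hat S$ is profinite, so $L := \operatorname{Frac}(\hat S)$ is a non-discrete locally compact topological field containing the local field $k_v$. By the classification of local fields, $L/k_v$ is finite, and the standard identity $[L:k_v]=ef$ forces $[L:k_v]=m$; in particular $\hat S = \CO_L$ is free of rank $m$ over $\CO_{k_v}$.

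To descend from $\hat S$ to $S$, I would use the embedding $S\hookrightarrow\hat S = \CO_L$ (injective by Krull's intersection theorem) to exhibit $S$ as a sub-$\CO_{k_v}$-module of a finitely generated $\CO_{k_v}$-module. Noetherianity of $\CO_{k_v}$ then makes $S$ finitely generated, torsion-freeness makes it free, and the rank is pinned down by $\dim_{k(v)}S/\pi S = \dim_{k(v)}k(v)[t]/(t^m) = m$. Consequently $[\operatorname{Frac}(S):k_v]=m$. The generic point of the closed subscheme $\Spec S\hookrightarrow\Spec R$ then yields, upon pushforward to $\CX$, a closed point $x\in X_{k_v}$ of degree $m$ whose closure in $\CX$ contains the unique closed point $\ov x$ of $\Spec S$, the required reduction.
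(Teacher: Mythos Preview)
Your argument is correct and complete. The paper itself gives no proof but simply cites \cite[2.3]{ct-saito}; what you have written is essentially a careful unpacking of that reference: use regularity of $\CX$ and of the reduced special fibre at $\ov x$ to obtain a regular system of parameters $t,t_1,\dots,t_d$ with $\pi=ut^m$, slice down by $t_1,\dots,t_d$ to a one-dimensional regular quotient $S$, and identify $S$ with the ring of integers of a totally ramified degree-$m$ extension of $k_v$. Your detour through the completion $\hat S$ and the classification of local fields to establish finiteness of $S$ over $\CO_{k_v}$ is a clean way to handle the point that $S$ is a priori only essentially of finite type.
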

\begin{proof}
 See \cite[2.3]{ct-saito}.
\end{proof}

Conversely, the following result applies.
\begin{lemma}\label{lem:converse-hensel}
 Let $\CX$ be a proper, regular, flat model over $\CO_{k_v}$ of $X_{k_v}$ and $x$ a closed in point in $X_{k_v}$ of degree $d$ with reduction $\tilde x$.
 
 Let $D_j$, $j\in J$, be the irreducible components of $X_{k(v)}$ on which $\tilde x$ lies. Denote by $m_j$ the multiplicity of $D_j$ and by $d_j$ the minimal degree of an extension of $k(v)$ over which  $D_j$ splits into geometrically irreducible components. Then $\gcd_{j\in J} m_j d_j$ divides $d$.
\end{lemma}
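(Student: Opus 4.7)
My plan is to extend $x$ to an arc $\varphi : \Spec\CO_K \to \CX$ via the valuative criterion of properness, carry out a local factorisation in the regular local ring at $\tilde x$ to extract the ramification part of $d$, and then apply Stein factorisation to each $D_j$ to bring in the splitting degrees $d_j$. Write $K = k_v(x)$ and $[K:k_v] = d = ef$, where $e = v_K(\pi)$ is the ramification index for a uniformiser $\pi$ of $k_v$ and $f = [\kappa : k(v)]$ the residue degree.

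First, since $\CX$ is proper and $\CO_K$ is a DVR, the inclusion $\Spec K \hookrightarrow X_{k_v}$ extends uniquely to a morphism $\varphi: \Spec\CO_K \to \CX$; by definition the closed point maps to $\tilde x$, and $\varphi$ is local there. Set $R = \CO_{\CX, \tilde x}$. Regularity of $\CX$ makes $R$ a regular local ring, hence a UFD, so each component $D_j$ with $j\in J$ is cut out at $\tilde x$ by a single prime element $t_j\in R$. Since $\CX$ is flat over $\CO_{k_v}$, the special fibre is the Cartier divisor $\mathrm{div}(\pi)$, which as a Weil divisor on $\Spec R$ equals $\sum_{j\in J} m_j\,\mathrm{div}(t_j)$. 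Two elements of a UFD with the same divisor differ by a unit, so
\[\pi \ = \ u \prod_{j \in J} t_j^{m_j}\quad\text{in } R\]
for some $u\in R^\times$. Applying $\varphi^*$ followed by $v_K$, and noting that the locality of $\varphi$ forces $a_j := v_K(\varphi^*t_j)\geq 1$ for each $j\in J$, yields the key identity
\[e \ = \ \sum_{j \in J} m_j a_j.\]

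To incorporate the splitting degrees $d_j$, I would use the Stein factorisation of the proper reduced irreducible $k(v)$-variety $D_j$. The algebra $L_j := H^0(D_j, \CO_{D_j})$ is a finite (separable) extension of $k(v)$, and the number of geometric components of $D_j$ equals the number of primitive idempotents in $L_j\otimes_{k(v)}\ov{k(v)}$, namely $[L_j:k(v)]$; therefore $[L_j:k(v)] = d_j$. By construction $D_j \to \Spec k(v)$ factors through $\Spec L_j$, and specialising at the closed point $\tilde x \in D_j$ produces embeddings $L_j \hookrightarrow k(\tilde x) \hookrightarrow \kappa$, so $d_j \mid f$ for every $j\in J$. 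Multiplying $e = \sum_{j\in J} m_j a_j$ by $f$ and regrouping,
\[d \ = \ ef \ = \ \sum_{j\in J} (m_j d_j) \cdot \frac{fa_j}{d_j},\]
in which each coefficient $fa_j/d_j$ is a positive integer. This exhibits $d$ as an integer linear combination of the $m_j d_j$, giving $\gcd_{j\in J}(m_j d_j)\mid d$, as required.

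The substantive step is the Stein-factorisation embedding $L_j \hookrightarrow k(\tilde x)$, which is what imports the residue-degree information $d_j\mid f$ from the geometry of $D_j$ to the arithmetic of the closed point $\tilde x$. Without it one only recovers $\gcd_{j\in J}(m_j)\mid e$, which is considerably weaker; everything else is local-ring bookkeeping facilitated by the regularity of $\CX$ (needed solely to ensure that each $D_j$ is locally principal at $\tilde x$).
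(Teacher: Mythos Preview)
The paper does not supply a proof of its own; it cites \cite[1.6]{bosch-liu}. Your strategy---extend $x$ by the valuative criterion, factor $\pi$ in the regular local ring, then invoke the field of constants of each $D_j$---is the natural one and is essentially what Bosch--Liu do.

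There is, however, a genuine gap in the Stein-factorisation step. With $L_j=H^0(D_j,\CO_{D_j})$, the degree $[L_j:k(v)]$ counts the geometric \emph{connected} components of $D_j$, whereas $d_j$ is the degree $[L_j':k(v)]$ of the algebraic closure $L_j'$ of $k(v)$ in the \emph{function field} $K(D_j)$, which governs the geometric \emph{irreducible} components. One has $L_j\subseteq L_j'$, with equality when $D_j$ is normal, but components of the special fibre of a regular model need not be normal. For instance, $\CO_{k_v}[[x,y]]/(\pi-y^2+ax^2)$ with $a\in k(v)^\times\setminus k(v)^{\times 2}$ is a two-dimensional regular local $\CO_{k_v}$-algebra whose special fibre is two Galois-conjugate lines through a $k(v)$-rational node; here $L_j=k(v)$ but $d_j=2$, and taking $K/k_v$ totally ramified ($f=1$) falsifies your assertion $d_j\mid f$. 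The lemma nonetheless holds in this example because $e=v_K(\beta^2-a\alpha^2)$ is automatically even; the correct intermediate statement is $d_j\mid a_jf$ rather than $d_j\mid f$. To prove it, interpret $a_jf$ as the $k(v)$-degree of the intersection cycle $\bar x\cdot D_j$ and compute this degree after pulling back to the normalisation $\tilde D_j$ (for surfaces this is a line-bundle degree, preserved under finite birational pullback), on which every closed point has residue field containing $L_j'$. Once $d_j\mid a_jf$ is in hand, your identity $d=\sum_j(m_jd_j)(a_jf/d_j)$ has integer coefficients and the argument concludes as you wrote.
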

\begin{proof}
 See \cite[1.6]{bosch-liu}.
\end{proof}

\begin{lemma}\label{lem:special-generic}
 Let $\CX$ be a proper, normal, flat model over $\CO_{k_v}$ of $X_{k_v}$. Let $\Frob_v$ be the Frobenius element in the absolute Galois group of $k(v)$.
 
 If $I_{\CX_{k(v)}}(\Frob_v)|r$, then $X_{k_v}$ is $r$-cycle-split. If $\CX$ is regular, then the converse holds. 
 
 There exists a function $\Phi:\BN^3\to\BN$ not depending on $X$ with the following property. Let $\iota:\CX_{k(v)}\dashrightarrow\BP^\nu_{k(v)}$ be a rational embedding. If \[I_{\CX_{k(v)}}(\Frob_v)|r,\] then there exists a zero-cycle $Z$ of degree $r$ on $X_{k_v}$ with $\maxdeg Z\leq \Phi(\iota)$ (where $\Phi(\iota)$ is defined as after Lemma~\ref{lem:lang-weil}).
\end{lemma}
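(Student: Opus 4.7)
The plan is to combine the uniform Lang--Weil estimate of Lemma \ref{lem:lang-weil} with the two generalised Hensel statements, Lemmas \ref{lem:generalised-hensel} and \ref{lem:converse-hensel}. The crucial combinatorial simplification is that, because $k(v)$ is finite, every finite extension is cyclic, so the orbit sizes $m_{ij}^{\Frob_v}$ appearing in Definition \ref{def:general-index} collapse to the degrees $d_i^m := [K_i^m:k(v)]$ of the fields appearing in $Z^m_{\CX_{k(v)}}$. Hence
\[I_{\CX_{k(v)}}(\Frob_v)=\gcd_{i,m}(m\cdot d_i^m),\]
where $C_i^m$ runs over the reduced, irreducible components of $\CX_{k(v)}$ of geometric multiplicity $m$ and $d_i^m$ equals the number of geometric components of $C_i^m$.

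For the forward direction together with the effective bound, assume $I_{\CX_{k(v)}}(\Frob_v)\mid r$. For each pair $(i,m)$ I would pick a single geometric component $D$ of $C_i^m\times_{k(v)}K_i^m$ and pass to the dense open $D^\circ\subset D$ obtained by removing the singular locus of the reduction of $\CX_{k(v)}$, the intersections with other irreducible components, and the locus where $\CX$ fails to be regular. Lemma \ref{lem:lang-weil} applied to $D^\circ$ inside its closure in $\BP^\nu$ yields a zero-cycle of degree $1$ on $D^\circ$ over $K_i^m$ of max-degree bounded by $\ov\Phi$ of the induced embedding data. Pushing forward to $C_i^m$ gives a zero-cycle of degree $d_i^m$ over $k(v)$, and each closed point in it, after base-changing to the unramified extension of $k_v$ matching its residue field, lifts via Lemma \ref{lem:generalised-hensel} to a closed point in $X_{k_v}$ whose degree is multiplied by $m$. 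This produces, for every $(i,m)$, a zero-cycle on $X_{k_v}$ of degree $m\cdot d_i^m$ with $\maxdeg$ bounded by some function of $\ov\Phi(\iota)$. A B\'ezout argument then combines these $\BZ$-linearly into one of total degree exactly $r$, using $\gcd_{i,m}(md_i^m)\mid r$.

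For the converse (when $\CX$ is regular), a zero-cycle of degree $r$ on $X_{k_v}$ decomposes into closed points of various degrees $d$. By Lemma \ref{lem:converse-hensel}, each such $d$ is divisible by $\gcd_j m_jd_j$, where $D_j$ ranges over the components of $\CX_{k(v)}$ passing through the reduction of the point. Since each entry $m_jd_j$ occurs in the gcd defining $I_{\CX_{k(v)}}(\Frob_v)$, we get $I_{\CX_{k(v)}}(\Frob_v)\mid m_jd_j$, hence $I_{\CX_{k(v)}}(\Frob_v)\mid d$ for every summand, and finally $I_{\CX_{k(v)}}(\Frob_v)\mid r$.

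The main obstacle will be the careful bookkeeping needed for the effective bound $\Phi(\iota)$: one must check that for each $D^\circ$ the degrees of its closure and boundary in $\BP^\nu$, together with the multiplicities $m$ that appear, can be controlled purely in terms of $N(v)$, $\deg\ov{\iota(U)}$ and $\deg\partial\iota(U)$, independently of $X$. A subsidiary subtlety is ensuring that the points produced by Lang--Weil simultaneously satisfy all the hypotheses of Lemma \ref{lem:generalised-hensel} (smoothness of the reduction, lying on a single component, regularity in $\CX$), but this is automatic once the closed loci of positive codimension described above have been removed from $D$.
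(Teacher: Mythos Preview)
Your proposal is correct and follows essentially the same route as the paper: both arguments first unwind the combinatorial index over the finite field $k(v)$ to $\gcd_{i,m}(m\,d_i^m)$, then use Lang--Weil on each geometrically irreducible component (after removing the non-regular locus of $\CX$ and of the special fibre, which is proper in each component by normality of $\CX$) to obtain closed points of controlled degree, lift via Lemma~\ref{lem:generalised-hensel}, and combine by B\'ezout; the converse in the regular case is deduced from Lemma~\ref{lem:converse-hensel} exactly as you outline. Your concern about bounding $\deg\overline{D^\circ}$ and $\deg\partial D^\circ$ in terms of $\iota$ alone is handled in the paper simply by noting that the degree of the bad locus $Z_j$ is bounded in terms of $\nu$ and the degree of the image of $\iota$, which suffices for a uniform $\Phi$.
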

\begin{proof}
 Assume $I_{\CX_{k(v)}}(\Frob_v)|r$. Then there exist geometrically irreducible components $D_j$, $j\in J$, of the special fibre of multiplicities $m_j$ defined over extensions of $k(v)$ of degrees $d_j$ s.t. \[\gcd_{j\in J}d_jm_j=I_{\CX_{k(v)}}(\Frob_v).\]
 By the Lang-Weil estimates as formulated in Section\nobreakspace \ref {sec:term}, each $D_j$ has a zero-cycle $Z$ of degree $d_j$. 
 
 Let $Z_j$ be the union of the non-regular locus of $\CX$ and the non-regular locus of the reduction of $D_j$. Because $\CX$ is assumed normal, hence regular in codimension $1$, $Z_j$ does not contain all of $D_j$. Then $\deg Z_j$ has an upper bound only depending on $\nu$ and the degree of the image of $\iota$. By the Lang-Weil estimates as described in Section~\ref{sec:term}, one can arrange for the summands of $Z$ to avoid all $Z_j$ and satisfy $\maxdeg Z\leq\Phi(\iota)$ for a suitable function $\Phi$.
 
 Applying Lemma~\ref{lem:generalised-hensel} to each of the summands, the existence of points of orders $m_jd_j$ and thus a zero-cycle of degree $r$ in $X_{k_v}$ follows.
 
 The converse in the case of regular $\CX$ follows from Lemma~\ref{lem:converse-hensel}.
\end{proof}

\begin{remark}
 We remark that to examine $r$-cycle-splitness of the special fibre itself, all components of multiplicity greater than $1$ would have to be discarded. Thus, there are two notions, $r$-cycle-split and combinatorially $r$-cycle-split. This is a difference to the case of rational points with only one notion of pseudo-split.
\end{remark}

\begin{lemma}\label{lem:variety-cycle-split}
 Let $X$ be a smooth, proper variety over a number field $k$. Let $\iota: X\dashrightarrow\BP^\nu$ be a rational embedding of $X$. Then $X$ is almost everywhere locally $r$-cycle-split if and only if $X$ is combinatorially $r$-cycle-split. In this case, $X_{k_v}$ has a zero-cycle $Z$ of degree $r$ with $\maxdeg Z\leq \Phi(\iota)$ for all $v\notin S$.
\end{lemma}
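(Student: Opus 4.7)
The plan is to reduce the statement to the $0$-dimensional case treated in Section~\ref{sub:base0} by a spreading-out argument. Since $X$ is smooth and proper over $k$, there is a finite set of places $S$ and a smooth, proper model $\CX$ of $X$ over $\CO_{k,S}$; the rational embedding $\iota$ extends to a rational map $\CX\dashrightarrow\BP^\nu_{\CO_{k,S}}$ after enlarging $S$, and the degrees $\deg\ov{\iota(U)}$ and $\deg\partial(\iota(U))$ are constant across fibres for $v\notin S$, so the quantity $\Phi(\iota)$ from Lemma~\ref{lem:special-generic} agrees with the value attached to each reduction. Because $\CX$ is smooth over $\CO_{k,S}$, both $X$ and each special fibre $\CX_{k(v)}$ are geometrically reduced, so only components of geometric multiplicity $m=1$ contribute to the combinatorial indices.

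Enlarging $S$ if needed, Lemmas~\ref{lem:irr-functor} and \ref{lem:irr-rep} produce a finite étale cover $\CZ\to\Spec\CO_{k,S}$ representing $\Irr^1_{\CX/\CO_{k,S}}$, whose generic fibre is $Z_X^1$ and whose reduction at $v\notin S$ is $Z^1_{\CX_{k(v)}}$, by invoking Lemma~\ref{lem:irr-rep} on the special fibre as well. After a further enlargement of $S$ so that the Galois closure $L$ of $Z_X^1$ is unramified at all $v\notin S$ and has its own finite étale model, the results of Section~\ref{sub:base0} apply fibrewise: with $G=\Gal(L/k)$, the orbit sizes of $\Frob_v$ on each coset space $G/H_i$ simultaneously describe the decomposition of $Z_X^1\otimes k_v$ and the components of $Z^1_{\CX_{k(v)}}$. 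Consequently $I_X(\Frob_v)=I_{\CX_{k(v)}}(\Frob_v)$ for all $v\notin S$.

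The two directions now fall out of Lemma~\ref{lem:special-generic}. If $X$ is combinatorially $r$-cycle-split, then $I_X(\Frob_v)\,|\,r$ for every $v\notin S$, and applying the lemma to the smooth (hence normal and regular) model $\CX$ yields a zero-cycle of degree $r$ on $X_{k_v}$ with $\maxdeg Z\leq\Phi(\iota)$. Conversely, if there exists $g\in G$ with $I_X(g)\nmid r$, then by Chebotarev density infinitely many $v\notin S$ satisfy $\Frob_v$ in the conjugacy class of $g$, and the converse assertion of Lemma~\ref{lem:special-generic} (valid because $\CX$ is regular) shows that $X_{k_v}$ fails to be $r$-cycle-split at those places. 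The main obstacle is the spreading-out bookkeeping---producing a single finite $S$ outside of which $\CX$ is smooth, $\CZ$ and the Galois closure of $Z_X^1$ are étale, and the embedding $\iota$ reduces with controlled degrees---but each of these is standard, and the Galois-theoretic identification of indices is a formal consequence of Section~\ref{sub:base0}.
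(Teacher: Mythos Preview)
Your proposal is correct and follows essentially the same route as the paper: spread out $X$ and $\iota$ to a smooth model over $\CO_{k,S}$, use Lemmas~\ref{lem:irr-functor} and~\ref{lem:irr-rep} to identify $\Irr^m_{\CX/\CO_{k,S}}$ with $\Spec(\bigoplus_i \CO_{K_i,S})$, and then invoke Lemma~\ref{lem:special-generic}. You have simply made explicit what the paper leaves implicit---that smoothness forces $m=1$, that the identification of indices passes through the special fibre, and that the converse direction requires Chebotarev---but the argument is the same.
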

\begin{proof}
 Let $U\subseteq X$ be a dense open subvariety on which $\iota$ is defined. We can find a finite set $S$ of places such that $U\hookrightarrow X$ and $\iota:U\hookrightarrow\BP^\nu$ spread out to models $\CU\hookrightarrow\CX$ and $\iota_S:\CU\hookrightarrow\BP^\nu_{\CO_{k,S}}$ over $\CO_{k,S}$ where $\CU$ and $\CX$ are smooth over $\CO_{k,S}$.
 
 By Lemma\nobreakspace \ref {lem:irr-rep} and Lemma\nobreakspace \ref {lem:irr-functor}, after possibly enlarging $S$, $\Irr_{\CX/\CO_{k,S}}^m$ is represented by $\Spec(\oplus_{i=1}^n \CO_{K_i,S})$. The result now follows from Lemma\nobreakspace \ref {lem:special-generic}.
\end{proof}

\section{\texorpdfstring{$s^{0}$}{s0}-invariants}
In analogy to the $s$-invariants in \cite{pseudo-split}, we construct \emph{$s^{0}$-invariants} that measure failure of combinatorial cycle-splitness in families. Let $f:X\to Y$ be a morphism of varieties over a number field $k$.

For any (possibly non-closed) point $y\in Y$, set $K:=k(y)$. We get finite étale (possibly empty) $K$-schemes $Z^m_{f^{-1}(y)}=\Irr^m_{f^{-1}(y)/K}$ for all multiplicities $m$. We may pick $L/K$ a minimal Galois extension which splits all $Z^m_{f^{-1}(y)}$ with Galois group $G$. Denote by $k_K$ and $k_L$ the algebraic closures of $k$ in $K$ and $L$. By replacing $k_L$ with its Galois closure and extending $L$, we can assume that $k_L/k$ is Galois. Let $N$ be the subgroup of $G$ acting trivially on $k_L$. Denote by $\Omega_{k_K}$ the set of finite places of $k_K$.

\begin{definition}
 For a finite place $v$ of $k$, define $s^{0,r}_{f,y}(v)$ in the following way:
 \begin{enumerate}[(i)]
  \item as $1$, if $v$ ramifies in $k_L$ or there is no place in $k_K$ of degree $1$ over $v$
  \item otherwise, as
  
  \[\frac{\sum_{\substack{w\in\Omega_{k_K}\\ \N(w)=\N(v)\\w|v}}\#\{g\in G:\Frob_w\equiv g\bmod N, I_{f^{-1}(y)}(g)|r\}}{\#N\#\{w\in\Omega_{k_K}|\N(w)=\N(v),w|v\}}.\]
 \end{enumerate}
\end{definition}
One can see that $s^{0,r}_{f,y}(v)$ is constant on the conjugacy class of $\Frob_v$, i.e.\ that this function is Frobenian in the sense of Serre \cite[\S3.3.3.5]{serre} but this fact will not be directly needed.

Over finite fields, the $s^0$-invariants asymptotically quantify the failure of combinatorial $r$-cycle-splitness.
\begin{proposition}\label{prop:asymptotics}
  Assume $Y$ is integral of dimension $n$ with generic point $\eta$. Let $\CF:\CX\to\CY$ be a model of $f$ over $\CO_{k}$. Then
 \begin{align*}
&\#\{y\in\CY(k(v))|\CF^{-1}(y)\text{ is combinatorially $r$-cycle-split}\}\\=&s^{0,r}_{f,\eta}(v)\#\CY(k(v))+O(\N(v)^{n-1/2})
 \end{align*}
 as $\N(v)\to\infty$, where the asymptotic constant of the $O$-notation only depends on the chosen model.
\end{proposition}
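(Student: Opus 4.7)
The plan is to follow the LSS strategy of spreading out the generic Galois data and counting via a quantitative form of Lang--Weil on an étale cover. First, by Lemmas~\ref{lem:irr-functor}--\ref{lem:irr-rep}, there is a dense open $\CV\subset\CY$ on which, for every multiplicity $m$ appearing in $f^{-1}(\eta)$, the functor $\Irr^m_{\CX/\CV}$ is represented by a finite étale $\CV$-scheme $\CZ^m$ with generic fibre $Z^m_{f^{-1}(\eta)}$. Taking the normalization of $\CV$ in $L$ and shrinking further produces a finite étale $G$-Galois cover $\CW\to\CV$, and each component of each $\CZ^m$ is realized as a quotient $\CW/H$ for a suitable subgroup $H\subseteq G$. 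Enlarge the finite exceptional set $S$ to include primes ramifying in $k_L$, primes of bad reduction for $\CW\to\CV$, and primes below which the representability fails; this does not affect the asymptotic.

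For $y\in\CV(k(v))$ with $v\notin S$, the Frobenius $\Frob_v$ acts on the $G$-torsor $\CW_y(\ov{k(v)})$ as an element $g_y\in G$, well-defined up to conjugation and lying in the coset $\Frob_v N$ (the induced action on $k_L$ is the usual Frobenius of $v$). Since $\CZ^m\to\CV$ is étale, $\CZ^m_y(\ov{k(v)})$ and $\CZ^m_\eta(\ov K)$ are isomorphic as $G$-sets with $\Frob_v$ acting through $g_y$, so $I_{\CF^{-1}(y)}(\Frob_v)=I_{f^{-1}(\eta)}(g_y)$ directly from Definition~\ref{def:general-index}. Moreover, because $\Gal(\ov{k(v)}/k(v))$ is topologically cyclic, combinatorial $r$-cycle-splitness of $\CF^{-1}(y)$ reduces to the single condition $I_{\CF^{-1}(y)}(\Frob_v)\mid r$: for each irreducible $k(v)$-factor of $Z^m_{\CF^{-1}(y)}$ of degree $s_i$, the orbit of $\Frob_v^j$ has size $s_i/\gcd(s_i,j)\mid s_i$, hence $I(\Frob_v^j)\mid I(\Frob_v)$ for every $j$.

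I would then apply the quantitative Chebotarev--Lang--Weil estimate to the Galois cover $\CW\to\CV$: for every conjugacy class $C\subset\Frob_v N$,
\[\#\{y\in\CV(k(v)):g_y\in C\}=\frac{|C|}{|N|}\N(v)^n+O(\N(v)^{n-1/2}),\]
with implicit constant depending only on $\CW\to\CV$. Summing over those $C$ with $I_{f^{-1}(\eta)}(C)\mid r$ gives
\[\#\bigl\{y\in\CV(k(v)):\CF^{-1}(y)\text{ combinatorially $r$-cycle-split}\bigr\}=\frac{\#\{g\in\Frob_v N:I(g)\mid r\}}{|N|}\N(v)^n+O(\N(v)^{n-1/2}),\]
and the leading factor matches $s^{0,r}_{f,\eta}(v)$ because the geometric integrality of $Y$ forces $k_K=k$, so the only place of $k_K$ over $v$ is $v$ itself, of degree one. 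The complement $\CY\setminus\CV$ has dimension $\le n-1$ and therefore contributes $O(\N(v)^{n-1})$ by Lang--Weil, while $\#\CY(k(v))=\N(v)^n+O(\N(v)^{n-1/2})$, so substituting $\#\CY(k(v))$ for $\N(v)^n$ preserves the stated error.

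The main obstacle is the quantitative Chebotarev--Lang--Weil input, i.e.\ controlling the $O(\N(v)^{n-1/2})$ error in $\#\{y:g_y\in C\}$ uniformly in $C$ with constants independent of $v$. This is the direct analogue of the $s$-invariant estimate in \cite{pseudo-split} and can be handled identically, either by applying ordinary Lang--Weil to the geometrically connected components of the quotient covers $\CW/H\to\CV$ and inverting via Möbius on the subgroup lattice of $G$, or by invoking Serre's formalism of Frobenian functions. All remaining steps are routine translations of the LSS arguments into the combinatorial cycle-split framework.
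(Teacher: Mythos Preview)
Your overall strategy matches the paper's, but there is a genuine gap: you invoke ``the geometric integrality of $Y$'' to force $k_K=k$, yet the proposition assumes only that $Y$ is integral. This is not a harmless strengthening. The statement is applied downstream (Proposition~\ref{prop:non-surjectivity} via Corollary~\ref{cor:combinatorial-fail}) with the base replaced by the closure of a codimension-$1$ point $\vartheta$, which is typically not geometrically integral over $k$. Once $k_K\neq k$, your coset $\Frob_v N$ is not even well-defined: one has $G/N\cong\Gal(k_L/k_K)$, not $\Gal(k_L/k)$, so the arithmetic Frobenius of $v$ does not single out one coset of $N$ in $G$. Consequently both your Chebotarev count and your identification of the leading constant with $s^{0,r}_{f,\eta}(v)$ break down.

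The paper handles this by first observing that $\Irr_{\CY_{k(v)}/k(v)}$ is represented by $\bigoplus_{w\mid v}k(w)$, so the geometrically irreducible components of $\CY_{k(v)}$ (disjoint by normality) are indexed by the degree-one places $w$ of $k_K$ over $v$; Lang--Weil gives $\#\CY(k(v))=\#\{w:\N(w)=\N(v),\,w\mid v\}\cdot\N(v)^n+O(\N(v)^{n-1/2})$. The Chebotarev count is then carried out component by component: a point $y$ on the component $\CY_w$ has $g_y\bmod N=\Frob_w\in G/N$, and summing over $w$ recovers exactly the numerator in the definition of $s^{0,r}_{f,\eta}(v)$, while the denominator $\#N\cdot\#\{w\}$ matches the Lang--Weil main term for $\#\CY(k(v))$. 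Your argument goes through once you stratify by $w$ in this way; the remaining steps (restricting to an open where $\Irr^m$ is finite \'etale, reducing combinatorial $r$-cycle-splitness over a finite field to the single condition on $\Frob_v$, absorbing the complement into the error term) are correct and agree with the paper.
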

\begin{proof}
 The main idea after \cite[Proposition 3.13]{pseudo-split} is to count and then compare both sides using Lang-Weil estimates and the Cebotarev density theorem for schemes. We divide the proof into several parts.
 
 \textbf{Set-up} By the Lang-Weil estimates we can remove strict closed subsets of $\CY$ since for dimension reasons, their rational points only contribute to the error term. Hence, with the help of Lemma\nobreakspace \ref {lem:irr-functor}, we assume that $\Irr^m_{\CX/\CY}\to\CY$ finite étale.
 
 In the same way, we ensure that $\CY$ and its special fibres $\CY_{k(v)}$ are normal for all $v$ not contained in some finite set $S$. Set $y:=\eta$  and from there on $K$, $L$, $k_K$ and $k_L$, $G$ and $N$ as before. Enlarging $S$ further, we may spread out and assume that $L$ is the generic fibre of a Galois closure $\CL$ of $\Irr^m_{\CX/\CY}\to\CY$. From now on, let $v\notin S$
 
 \textbf{Counting points of $\CY_{k_v}$ with Lang-Weil} The functor $\Irr_{\CY_{k(v)}/k(v)}$ is represented by \[\Spec\CO_{k_K}\otimes_{\CO_k} k(v)=\bigoplus_{\substack{w\in\Omega_{k_K}\\w|v}} k(w).\]
 Therefore, geometrically irreducible components of $\CY_{k(v)}$ correspond to places $w$ with $\N(w)=\N(v)$. We write $\CY_w$ for such a component. By the normality assumption, the irreducible components of $\CY_{k(v)}$ are all disjoint, so if there is none which is geometrically irreducible, $\CY_{k(v)}$ has no rational point. This is the trivial case of the proposition. In the non-trivial case, we can count points by Lang-Weil:
 \begin{eqnarray*}\#\CY(k(v))&=&\sum_{\substack{w\in\Omega_{k_K}\\ \N(w)=\N(v)\\w|v}}\#\CY_w(k(w))=\sum_{\substack{w\in\Omega_{k_K}\\ \N(w)=\N(v)\\w|v}}\N(v)^n+O(\N(v)^{n-1/2})\\&=&{\#\{w\in\Omega_{k_K}|\N(w)=\N(v),w|v\}}\N(v)^n+O(\N(v)^{n-1/2}).\end{eqnarray*}
 
 \textbf{Counting combinatorially $r$-cycle-split fibres with Cebotarev} For a rational point $y\in\CY(k(v))$, we can view the Frobenius $\Frob_y$ as an element of $G$ up to conjugacy. The fibre $\CF^{-1}(y)$ is combinatorially $r$-cycle-split if and only if $I_{\CF^{-1}(\eta)}(\Frob_y)=I_{\CF^{-1}(y)}(\Frob_y)|r$. Let $\delta_\CF(g)\in\{0,1\}$ be the indicator function of the set of elements $g\in G$ for which $I_{\CF^{-1}(y)}(g)|r$. This function only depends on the conjugacy class of $g$. Applying the Cebotarev density theorem for étale morphisms as in \cite[9.15]{serre} to $\delta_\CF$ one gets:
 \begin{align*}
  \#\{y\in\CY(k(v))|\CF^{-1}(y)\text{ is combinatorially $r$-cycle-split}\}\\=\frac{\N(v)^n}{\#N}\sum_{\substack{w\in\Omega_{k_K}\\ \N(w)=\N(v)\\w|v}}\#\{g\in G:\Frob_w\equiv g\bmod N, I_{f^{-1}(\eta)}(g)|r\}+O((\N(v))^{n-1/2})
 \end{align*}
 
 Comparing both counts with the definition of $s^{0,r}_{f,\eta}(v)$, the result follows.
\end{proof}

The asymptotic formula gives a necessary condition for combinatorial cycle-splitness of all fibres.
\begin{corollary}\label{cor:combinatorial-fail}
 With the same notation, if $s^{0,r}_{f,\eta}(v)<1$ for some $v\notin S$, then there exists $y\in\CY(k(v))$ such that $\CF^{-1}(y)$ is not combinatorially $r$-cycle-split.
\end{corollary}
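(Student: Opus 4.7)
The plan is to derive the corollary directly from Proposition~\ref{prop:asymptotics} by enlarging $S$ to absorb a finite set of small-norm places, so that for the remaining $v$ with $s^{0,r}_{f,\eta}(v)<1$ the asymptotic inequality becomes strict at that very $v$. The first key observation is that $s^{0,r}_{f,\eta}(v)$ depends on $v$ only through the conjugacy class of $\Frob_v$ modulo $N$ in the finite group $G$, so this function takes only finitely many values as $v$ varies. In particular, there is a constant $c_0>0$ such that $1-s^{0,r}_{f,\eta}(v)\geq c_0$ whenever $s^{0,r}_{f,\eta}(v)<1$.

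Next, I would unpack the hypothesis: $s^{0,r}_{f,\eta}(v)<1$ rules out case (i) of the definition, so there exists $w\in\Omega_{k_K}$ with $w\mid v$ and $\N(w)=\N(v)$. Via Lemma~\ref{lem:irr-rep} this corresponds to a geometrically irreducible component of $\CY_{k(v)}$ already defined over $k(v)$, and Lang--Weil therefore yields a lower bound $\#\CY(k(v))\geq \N(v)^n-C_1\N(v)^{n-1/2}$ for a constant $C_1$ depending only on the model.

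Now I argue by contradiction. If every $y\in\CY(k(v))$ gave a combinatorially $r$-cycle-split fibre, then Proposition~\ref{prop:asymptotics} would force
\[c_0\#\CY(k(v))\leq (1-s^{0,r}_{f,\eta}(v))\#\CY(k(v))\leq C_2\N(v)^{n-1/2}\]
for a model-dependent constant $C_2$. Combining this with the Lang--Weil lower bound on $\#\CY(k(v))$ pushes $\N(v)$ below an explicit threshold $T$ determined solely by $c_0,C_1,C_2$, and hence by the model. Enlarging $S$ once more to include the finitely many places with $\N(v)\leq T$ makes this impossible, producing some $y\in\CY(k(v))$ whose fibre fails to be combinatorially $r$-cycle-split.

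The main subtlety is that the conclusion must be obtained at the very $v$ supplied in the hypothesis rather than at some larger-norm place sharing its Frobenius class. This is precisely what forces the combined use of the \emph{uniform} $O$-constant of Proposition~\ref{prop:asymptotics} and the discreteness of the range of the function $v\mapsto s^{0,r}_{f,\eta}(v)$: together they bound the exceptional places by a single finite set, independent of $v$, which may then be absorbed into $S$ once and for all.
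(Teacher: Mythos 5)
Your proof is correct and is essentially a detailed unpacking of the paper's one-sentence argument (``for $v$ large enough, there will be rational points on $\CY_{k(v)}$ but by Proposition~\ref{prop:asymptotics}, not all fibres over them can be combinatorially $r$-cycle-split''). You have made explicit the two uniformity inputs the paper uses implicitly — the discreteness of the range of $v\mapsto s^{0,r}_{f,\eta}(v)$ and the model-dependence (hence $v$-independence) of the $O$-constant — which together let the small-norm exceptional places be absorbed into $S$ once and for all, so the conclusion holds at the very $v$ in the hypothesis.
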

\begin{proof}
 For $v$ large enough, there will be rational points on $\CY_{k(v)}$ but by Proposition\nobreakspace \ref {prop:asymptotics}, not all fibres over them can be combinatorially $r$-cycle-split.
\end{proof}

The asymptotics also give the other direction.
\begin{corollary}\label{cor:cycle-split}
 With the same notation, assume $\CY$ is integral normal and $\Irr_\CF^m$ finite étale over $\CY$ for all $m$. Then there exists a finite set of places $S$ such that for all $v\notin S$, $s^{0,r}_{f,\eta}(v)=1$ if and only if the fibre of $\CF$ over every $y\in\CY(k(v))$ is combinatorially $r$-cycle-split.
\end{corollary}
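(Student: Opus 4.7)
The plan is to take the setup of Proposition~\ref{prop:asymptotics} as given and further enlarge $S$ so that for every $v \notin S$: the covers $\Irr^m_{\CX/\CY} \to \CY$ are finite étale and restrict to finite étale covers over each special fibre (so $I_{\CF^{-1}(y)}(g) = I_{f^{-1}(\eta)}(g)$ for every closed point $y$ and every $g \in G$); the special fibre $\CY_{k(v)}$ is normal; and $v$ is unramified in $k_L$. Normality makes the irreducible components of $\CY_{k(v)}$ disjoint integral varieties $\CY_w$ indexed by the places $w \mid v$ of $k_K$, and a rational point $y \in \CY(k(v))$ can lie on $\CY_w$ only when the residue extension $k(w)/k(v)$ is trivial, i.e.\ when $\N(w) = \N(v)$.

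For the ``$\Leftarrow$'' direction---that every fibre over $\CY(k(v))$ being combinatorially $r$-cycle-split forces $s^{0,r}_{f,\eta}(v) = 1$---I would argue by contrapositive using Corollary~\ref{cor:combinatorial-fail}. Since $s^{0,r}_{f,\eta}(v)$ takes values in a finite set of rationals in $[0,1]$ determined by $G$, the quantity $1 - s^{0,r}_{f,\eta}(v)$ is either zero or bounded below by a positive constant depending only on $G$. Comparing with the asymptotic formula in Proposition~\ref{prop:asymptotics} and enlarging $S$ so that the implicit $O(\N(v)^{n-1/2})$ error is dominated by $\#\CY(k(v))$ (which is of order $\N(v)^n$ whenever at least one $w \mid v$ satisfies $\N(w) = \N(v)$), a value strictly less than $1$ must produce at least one non-split fibre.

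For the ``$\Rightarrow$'' direction, suppose $s^{0,r}_{f,\eta}(v) = 1$. If no $w \mid v$ in $k_K$ has $\N(w) = \N(v)$, then by the setup above $\CY(k(v)) = \emptyset$ and the conclusion is vacuous, which is consistent with the value $s^{0,r}_{f,\eta}(v) = 1$ assigned by the definition in this case. Otherwise, for each $y \in \CY(k(v))$ lying on some $\CY_w$ with $\N(w) = \N(v)$, I need the Frobenius compatibility $\Frob_y \equiv \Frob_w \pmod{N}$ in $G$; granted this, specialising the hypothesis $s^{0,r}_{f,\eta}(v) = 1$ to $g = \Frob_y$ yields $I_{f^{-1}(\eta)}(\Frob_y) \mid r$, and the étale spreading of $\Irr^m_{\CX/\CY}$ transfers this equality to $I_{\CF^{-1}(y)}(\Frob_y) \mid r$, exhibiting $\CF^{-1}(y)$ as combinatorially $r$-cycle-split.

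The main technical obstacle I anticipate is verifying this Frobenius compatibility cleanly. It should follow from naturality of Frobenius applied to the tower $\CL \to \CY \to \Spec \CO_{k_K,S} \to \Spec \CO_{k,S}$ combined with the identification $G/N = \Gal(k_L/k)$: the closed point $y \in \CY$ maps to the closed point $w$ of $\Spec \CO_{k_K,S}$ whose existence is forced by the component $\CY_w$, and Frobenius conjugacy classes are compatible along this tower. Once this bookkeeping is done, both directions combine to give the ``if and only if'' statement for all $v$ outside the final enlarged $S$.
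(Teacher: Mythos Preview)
Your proposal is correct and follows essentially the same route as the paper: the ``$\Leftarrow$'' direction is exactly Corollary~\ref{cor:combinatorial-fail} (with your extra remark that the finitely many possible values of $s^{0,r}$ let one enlarge $S$ uniformly), and the ``$\Rightarrow$'' direction is precisely the paper's argument via a closed point $l\in\CL$ over $y$ and the functoriality of Frobenius along $k(y)=k(w)\subset k(u)\subset k(l)$, which is your ``naturality of Frobenius along the tower $\CL\to\CY\to\Spec\CO_{k_K,S}\to\Spec\CO_{k,S}$'' spelled out concretely. The only thing to add is that when unpacking $s^{0,r}_{f,\eta}(v)=1$ you should note that since each summand in the defining average lies in $[0,1]$, the average equals $1$ forces every summand to equal $1$, so the conclusion holds for the particular $w$ on whose component $y$ lies.
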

\begin{proof}
 One direction has just been proven. For the other direction, we use the same notation as in Proposition\nobreakspace \ref {prop:asymptotics}.
 
 A point $y\in\CY(k(v))$ must lie on a geometrically irreducible component corresponding to the degree $1$ place $w$ of $k_K$. Let $l\in\CL$ be a closed point over $y$ and $u$ be the corresponding place of its irreducible component. Then
 \[k(y)=k(w)\subset k(u)\subset k(l)\]
 and there exist natural embeddings \[\Gal(k(l)/k(y))\hookrightarrow G\] and \[\Gal(k(u)/k(y))\hookrightarrow G/N.\]
 
 By functoriality of Frobenius, we have \[\Frob_{l/y}\bmod N=\Frob_{u/w}.\] Because of the assumption that $s^{0,r}_{f,\eta}(v)=1$, we deduce that $\Frob_{l/y}$ acts on $\Irr^m_{\CF^{-1}(y)/y}$ such that $I_{\CF^{-1}(y)}(\Frob_{l/y})|r$. Hence $\CF^{-1}(y)$ is combinatorially $r$-cycle-split.
\end{proof}

\begin{corollary}
  The fibre $f^{-1}(y)$ is combinatorially $r$-cycle-split if and only if $s^{0,r}_{f,y}(v)=1$ for almost all $v$.
\end{corollary}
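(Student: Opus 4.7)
The plan is to prove the forward direction by unwinding definitions and the converse by a Cebotarev density argument, producing infinitely many places at which a prescribed bad Galois element is realised as the Frobenius of a degree-one place of $k_K$.

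For the forward direction, suppose $f^{-1}(y)$ is combinatorially $r$-cycle-split, so that $I_{f^{-1}(y)}(g)\mid r$ for every $g\in G$. Then in case (ii) of the definition, for any $w$ with $\N(w)=\N(v)$, the inner set $\{g\in G:\Frob_w\equiv g\bmod N,\ I_{f^{-1}(y)}(g)\mid r\}$ is the full coset $\Frob_w N$, of size $\#N$, so the ratio collapses to $1$. Case (i) contributes $1$ by definition. Hence $s^{0,r}_{f,y}(v)=1$ for \emph{every} $v$, a fortiori for almost all $v$.

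For the converse, assume $f^{-1}(y)$ is not combinatorially $r$-cycle-split and pick $g_0\in G$ with $I_{f^{-1}(y)}(g_0)\nmid r$. Let $\bar g_0$ be its image in $G/N\cong\Gal(k_L/k_K)$, and view $\bar g_0$ as an element of the larger group $\Gamma:=\Gal(k_L/k)$ via the inclusion $\Gal(k_L/k_K)\subseteq\Gamma$. I would then apply the Cebotarev density theorem to the Galois extension $k_L/k$ to produce a set of positive density of places $v$ of $k$, unramified in $k_L$, with $\Frob_v$ in the $\Gamma$-conjugacy class of $\bar g_0$. For any such $v$ I would choose $\tau\in\Gamma$ with $\tau^{-1}\Frob_v\tau=\bar g_0$; the coset $\tau\Gal(k_L/k_K)$ is then $\Frob_v$-stable, which under the standard correspondence between places of $k_K$ above $v$ and $\Frob_v$-orbits on $\Gamma/\Gal(k_L/k_K)$ yields a place $w\mid v$ of residue degree $1$ (so $\N(w)=\N(v)$) whose Frobenius in $G/N$ is exactly $\bar g_0$. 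Since $g_0\in\Frob_w N$ but $I_{f^{-1}(y)}(g_0)\nmid r$, the element $g_0$ is excluded from the inner set at $w$, which therefore has cardinality strictly less than $\#N$; the numerator in case (ii) thus drops below $\#N\cdot\#\{w:\N(w)=\N(v),\,w\mid v\}$, giving $s^{0,r}_{f,y}(v)<1$. This happens for infinitely many $v$, contradicting the hypothesis.

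The main obstacle I expect is the bookkeeping across the three Galois groups $G=\Gal(L/K)$, $G/N\cong\Gal(k_L/k_K)$, and $\Gamma=\Gal(k_L/k)$, together with the translation between ``$\Frob_v$ is $\Gamma$-conjugate to $\bar g_0$'' and ``there exists a degree-one place $w\mid v$ in $k_K$ with $\Frob_w=\bar g_0$ in $G/N$''. This is exactly the orbit/residue-degree dictionary for $\Frob_v$ acting on $\Gamma/\Gal(k_L/k_K)$, so once set up it is routine; the crucial conceptual point is that because the target $\bar g_0$ already lies inside the distinguished subgroup $\Gal(k_L/k_K)$, one can invoke Cebotarev over $k$ directly rather than over $k_K$, which is what makes the Frobenius of a suitable degree-one $w$ hit $\bar g_0$ on the nose.
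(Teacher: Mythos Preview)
Your proof is correct. The forward direction is exactly the unwinding of definitions, and your converse via Cebotarev for $k_L/k$ together with the orbit/residue-degree dictionary is valid: once $\tau^{-1}\Frob_v\tau=\bar g_0\in\Gal(k_L/k_K)$, the place $u=\tau^{-1}u_0$ of $k_L$ has $\Frob_{u/v}=\bar g_0\in\Gal(k_L/k_K)$, so its restriction $w$ to $k_K$ has $f(w/v)=1$ and $\Frob_w=\bar g_0$, whence the $w$-summand in the numerator drops strictly below $\#N$ while all others are at most $\#N$.

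Your route differs from the paper's. The paper simply observes that this corollary is the zero-dimensional base case of Corollary~\ref{cor:cycle-split}, which in turn was obtained from the asymptotic count in Proposition~\ref{prop:asymptotics} (Lang--Weil plus the scheme-theoretic Cebotarev theorem) together with a Frobenius-functoriality argument. You instead bypass the asymptotic machinery entirely and give a self-contained classical Cebotarev argument over the number field $k$. What you gain is transparency and independence from the heavier Proposition~\ref{prop:asymptotics}; what the paper's approach gains is economy, since the work was already packaged into Corollary~\ref{cor:cycle-split}. One small point worth making explicit in your write-up is the identification $G/N\cong\Gal(k_L/k_K)$: this uses that $k_K$ is algebraically closed in $K$ (immediate from its definition) so that $K$ and $k_L$ are linearly disjoint over $k_K$, making the restriction map $G\to\Gal(k_L/k_K)$ surjective with kernel $N$.
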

\begin{proof}
 This is Corollary~\ref{cor:cycle-split} in the case of a zero-dimensional base. 
\end{proof}

\section{Arithmetic cycle-surjectivity}
Let $f:X\to Y$ be a dominant morphism between proper, smooth, geometrically integral varieties with geometrically integral generic fibre over a number field $k$. 

\subsection{Birational invariance}
We want to prove that arithmetic $r$-cycle-surjectivity is a property invariant under modifications. The argument here is more subtle than in the case of rational points.

\begin{definition}
Let $v$ be a place of $k$. If a fibre over a $k_v$-point $y$ of $Y$ contains a zero-cycle of degree $r$ we call this cycle a \emph{witness for $r$-cycle-surjectivity over $y$ at $v$}.
\end{definition}

\begin{lemma}\label{lem:birational}
 Let $v$ be a place of $k$. Let $V$ be a dense open subset of $Y$. Assume that there exists $B\in\BN$  such that $f^{-1}(V)\to V$ is $r$-cycle-surjective at $v$ and there exist witnesses $Z_v$ for $r$-cycle-surjectivity over $y$ at $v$ for all $y\in V(k_v)$ with $\maxdeg Z_v\leq B$. Then $f$ is $r$-cycle-surjective at $v$.
\end{lemma}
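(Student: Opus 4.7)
The plan is to produce a witness for $r$-cycle-surjectivity over an arbitrary $y\in Y(k_v)$ by $v$-adic approximation from $V(k_v)$, using the uniform bound $B$ on $\maxdeg$ to obtain the required compactness in the limit step.

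First I would verify that $V(k_v)$ is $v$-adically dense in $Y(k_v)$. Since $Y$ is smooth, $Y(k_v)$ is a $v$-adic analytic manifold and $(Y\setminus V)(k_v)$ is a closed analytic subvariety of strictly smaller dimension, hence nowhere dense. So for any $y\in Y(k_v)\setminus V(k_v)$ I can pick a sequence $(y_m)$ in $V(k_v)$ with $y_m\to y$ in the $v$-adic topology, together with witnesses $Z_m$ over $y_m$ satisfying $\maxdeg Z_m\leq B$.

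Next I would translate each $Z_m$ into a $\Gal(\ov{k_v}/k_v)$-stable multiset $S_m$ of $r$ points in $X(\ov{k_v})$ (each closed point of degree $d$ contributes its $d$ geometric conjugates, with appropriate multiplicity). Since $\maxdeg Z_m\leq B$, every point of $S_m$ lies in some $X(L)$ with $[L:k_v]\leq B$; because $k_v$ has only finitely many subextensions $L_1,\dots,L_s$ of $\ov{k_v}$ of bounded degree, $S_m\subseteq\bigcup_j X(L_j)$. The $S_m$ have fixed cardinality $r$, so there are only finitely many possible combinatorial types (specifying how many points, with what multiplicity, lie in each $L_j$), and after passing to a subsequence I may assume the type is constant in $m$.

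Because $X$ is proper, each $X(L_j)$ is $v$-adically compact, so by a further diagonal extraction each of the $r$ tracked points converges to a limit in the appropriate $X(L_j)$. Let $S$ be the resulting limit multiset. Its cardinality is $r$; by continuity of $f$ and $y_m\to y$ every element of $S$ maps to $y$; and Galois-stability passes to the limit because the Galois action on each $X(L_j)$ is continuous. Hence $S$ defines a zero-cycle of degree $r$ on $f^{-1}(y)$. The most delicate point is this final limit step: one must ensure that collisions of points in the limit merely increase multiplicities without dropping the total degree, and that Galois-stability of the limit is not destroyed when Galois orbits degenerate in the limit.
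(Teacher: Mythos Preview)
Your sequential-compactness approach is sound in spirit and genuinely different from the paper's, but the step where you ``translate each $Z_m$ into a Galois-stable multiset $S_m$ of $r$ points'' has a gap: a zero-cycle of degree $r$ need not be effective (for $r=1$ the whole point is that the fibre may lack a rational point), so there is no multiset of cardinality $r$ representing it. If you allow signed multiplicities instead, then ``fixed cardinality $r$'' only bounds the net count, not the total variation $\sum_i |n_i|\,d_i$, and without such a bound your finitely-many-combinatorial-types and compactness steps fail. The repair is easy: from each $Z_m$ keep just one closed point of each degree $d\leq B$ that occurs in its support, obtaining a tuple of at most $B$ closed points whose degrees have $\gcd$ dividing $r$; now run your limit argument on these tuples inside a fixed Galois extension $M/k_v$ containing all degree-$\leq B$ extensions. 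In the limit each degree can only drop to a divisor, so the $\gcd$ still divides $r$, and your remaining worries about collisions and degenerating orbits are handled exactly as you say.

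The paper avoids sequences altogether. Writing $k_v(i)$ for the compositum of all degree-$i$ extensions of $k_v$ (finite, so $X(k_v(i))$ is compact and its image under $f$ is closed), it forms
\[
Y_B(f)=\bigcup_{\substack{I\subset\{1,\dots,B\}\\ \gcd(I)\mid r}}\ \bigcap_{i\in I} f\bigl(X(k_v(i))\bigr)\subset Y(\ov{k_v}),
\]
a closed set. The hypothesis gives $V(k_v)\subset Y_B(f)$; density of $V(k_v)$ in $Y(k_v)$ then forces $Y(k_v)\subset Y_B(f)$, and membership in $Y_B(f)$ directly furnishes closed points of suitable degrees in the fibre. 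This encodes the same idea as your repaired argument (track a bounded set of degrees rather than the cycle itself) but packages it as one closed-set containment instead of a diagonal extraction, and never has to confront signed cycles or limits of orbits.
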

\begin{proof}
 Assume cycle-surjectivity on an open $V$ with a uniform bound $B$ as described above. Let $k_v(i)$ denote the compositum of all degree $i$ extensions of $k_v$. Then $X(k_v(i))\subseteq X(\overline{k_v})$ is the set of $\overline{k_v}$-points fixed by all elements in $\Gal(\overline{k_v}/k_v(i))$ and this a closed subset. Hence 
 \[Y_B(f):=\bigcup_{\substack{I\subset \{1,\dots, B\}\\ \gcd(I)|r}}\bigcap_{i\in I}f(X(k_v(i)))\]
 is a finite union of closed subsets of $Y(\overline{k_v})$.
 
 Let $y$ be a $k_v$-rational point in $V$ for which the fibre $f^{-1}(y)$ contains a zero-cycle $Z$ of degree $r$ with $\maxdeg Z\leq B$. There exists $I\subset \{1,\dots, B\}$ such that \[y\in\bigcap_{i\in I}f(X(k_v(i)))\subset Y_B(f).\]
 
 On the other hand, a point $y\in Y_B(f)$ lies in $\bigcap_{i\in I}f(X(k_v(i)))$ for some $I\subset \{1,\dots, B\}$ with $\gcd(I)|r$, so its fibre has a closed $k_v(i)$-point for all $i\in I$. Let $j_i$ be the degree of this point. It follows that the prime factors of $j_i$ are contained in the prime factors of $i$. In particular, the fibre has a zero-cycle of degree $\gcd_{i\in I}j_i=\gcd I|r$.
 
 Now $Y_B(f)$ is closed and contains $V(k_v)$ which is dense and open in $Y(k_v)$, hence $Y(k_v)\subseteq Y_B(f)$.
\end{proof}

\begin{remark}
 The above proof generalises to $k_v$ any Henselian (non-trivially) valued field.
\end{remark}

\begin{lemma}
 To show arithmetic $r$-cycle-surjectivity of $f$, it is enough to show arithmetic $r$-cycle-surjectivity of $f^{-1}(V)\to V$ for a dense open $V$ in $Y$.
\end{lemma}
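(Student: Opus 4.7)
The plan is to deduce the result from Lemma~\ref{lem:birational} by producing, for each place $v$ outside a finite set, a uniform bound $B_v$ on the maximum degree of a witness for $r$-cycle-surjectivity over every $y\in V(k_v)$. Granted such a bound, Lemma~\ref{lem:birational} immediately yields $r$-cycle-surjectivity of $f\times_k k_v$ at each such $v$, hence arithmetic $r$-cycle-surjectivity of $f$.

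First, I would shrink $V$ if necessary; arithmetic $r$-cycle-surjectivity on $V$ implies the same on any smaller dense open, so by generic smoothness in characteristic zero we may assume $f|_{f^{-1}(V)}:f^{-1}(V)\to V$ is smooth. Applying Chow's lemma and spreading out, we fix a proper flat model $\CF:\CX\to\CY$ over $\CO_{k,S}$ for some finite set $S$, a dense open $\CV\subset\CY$ spreading $V$ on which $\CF$ is smooth, and a relative rational embedding $\iota:\CX\dashrightarrow\BP^\nu_\CY$ defined on a relative open $\CU\subset\CX$.

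Now fix $v\notin S$ and $y\in V(k_v)$. Properness of $f$ gives $y\in V(\CO_{k_v})$, and its closure $\wt y\subset\CV$ is a section $\Spec\CO_{k_v}\to\CV$. The pullback $\CX_{\wt y}:=\CX\times_\CY\Spec\CO_{k_v}$ is smooth over $\CO_{k_v}$, hence regular, and proper, providing a model of $f^{-1}(y)$. The embedding $\iota$ restricts to a rational embedding $\iota_{\wt y}:\CX_{\wt y,k(v)}\dashrightarrow\BP^\nu_{k(v)}$. Since Hilbert polynomials are locally constant in proper flat families over a base with only finitely many irreducible components, the degrees $\deg\ov{\iota_{\wt y}(\CU_{\wt y,k(v)})}$ and $\deg\partial(\iota_{\wt y}(\CU_{\wt y,k(v)}))$ are bounded by constants $D_1,D_2$ independent of $y$ and of $v\notin S$. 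By hypothesis $f^{-1}(y)$ is $r$-cycle-split over $k_v$, so the regularity of $\CX_{\wt y}$ permits the converse direction of Lemma~\ref{lem:special-generic} to give $I_{\CX_{\wt y,k(v)}}(\Frob_v)\mid r$; its forward direction then produces a zero-cycle $Z$ of degree $r$ on $f^{-1}(y)$ with $\maxdeg Z\leq\Phi(\N(v),D_1,D_2)=:B_v$, a bound independent of $y$.

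The principal obstacle is establishing the uniform control on $(D_1,D_2)$ across all fibres; this rests on the constancy of Hilbert polynomials in proper flat families applied to the closure of $\iota(\CU)$ in $\BP^\nu_\CY$ and to its boundary, and on choosing the model and embedding compatibly so that $\CX_{\wt y}$ remains smooth while the fibrewise embedding data stays uniformly bounded in $y$.
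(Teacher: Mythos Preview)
Your strategy coincides with the paper's: invoke Lemma~\ref{lem:birational}, shrink $V$ by generic smoothness so that $f|_{f^{-1}(V)}$ is smooth, fix a relative rational embedding $\iota:X\dashrightarrow\BP^\nu_Y$, and then argue that witnesses of uniformly bounded $\maxdeg$ exist by appealing to Lemma~\ref{lem:special-generic} fibrewise. The paper compresses this last step into a citation of Lemma~\ref{lem:variety-cycle-split}; you have unpacked it by explicitly constructing the model $\CX_{\wt y}$ and invoking both directions of Lemma~\ref{lem:special-generic}.

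In doing so, however, you make an unjustified claim. You write that for $y\in V(k_v)$, ``properness of $f$ gives $y\in V(\CO_{k_v})$, and its closure $\wt y\subset\CV$''. Properness of $\CY$ over $\CO_{k,S}$ (not of $f$) only yields an extension $\wt y:\Spec\CO_{k_v}\to\CY$; nothing forces the reduction $(\wt y\bmod\pi_v)$ to land in $\CV_{k(v)}$ rather than in $(\CY\setminus\CV)_{k(v)}$. For example, with $Y=\BP^1$, $V=\BP^1\setminus\{0\}$ and $y=\pi_v\in V(k_v)$, one has $\ov y=0\notin\CV_{k(v)}$. When $\ov y\notin\CV_{k(v)}$ the pullback $\CX_{\wt y}$ need not be smooth or even regular over $\CO_{k_v}$, so your appeal to the \emph{converse} direction of Lemma~\ref{lem:special-generic} (which requires regularity of the model) is not available, and the passage from ``$f^{-1}(y)$ is $r$-cycle-split'' to ``$I_{\CX_{\wt y,k(v)}}(\Frob_v)\mid r$'' breaks down. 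Such $y$ fill a nonempty $v$-adic open subset of $V(k_v)$ whenever $(\CY\setminus\CV)(k(v))\neq\emptyset$, so this is not a boundary case one can discard. The paper's one-line appeal to Lemma~\ref{lem:variety-cycle-split} is terse at exactly this point; your more explicit write-up is where the difficulty becomes visible.
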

\begin{proof}
 By Lemma\nobreakspace \ref {lem:birational} all we have to show is that for $v$ large enough, if $f$ is arithmetically $r$-cycle-surjective over $V$, there is a uniform bound on the maximum degree of witnesses. By generic smoothness \cite[Corollary 10.7]{hartshorne}, after shrinking $V$, we may assume that all fibres over $V$ are smooth.
 
 Let $\iota: X\dashrightarrow\BP^\nu_Y$ be a rational embedding. But now by Lemma\nobreakspace \ref {lem:variety-cycle-split} (which has a smoothness assumption), a fibre over a point in $V$ is almost everywhere locally $r$-cycle-split if and only if it is almost everywhere locally $r$-cycle-split with zero-cycles as witnesses that have maximum degree less than $\Phi(\iota)$.
\end{proof}

\subsection{Necessary condition}
From the results over finite fields, we can deduce a necessary condition for arithmetic $r$-cycle-surjec\-tivity.

\begin{proposition}\label{prop:transversal}
 Let $\CF:\CX\to\CY$ be a proper model of $f$ over $\CO_{k,S}$ for a finite set of places $S$ of $k$ with regular source and target. Let $\CT\subset\CY$ be a reduced divisor such that $\CF$ is smooth away from $\CT$. Then after possibly enlarging $S$, we can find a subset $\CR\subset\CT_{\CO_{k,S}}$ of codimension at least $2$ in $\CY_{\CO_{k,S}}$ such that for all $v\notin S$ the following holds.
 
 Choose $\wt y\in \CY(\CO_{k_v})$; denote its generic point by $y\in Y(k_v)$. If $\tilde y$ intersects $\CT_{\CO_{k,S}}$ transversally outside $\CR_{\CO_{k,S}}$ and the fibre at $(\tilde y\bmod \pi_v)$ is not combinatorially $r$-cycle-split, then $f^{-1}(y)$ is not $r$-cycle-split. 
\end{proposition}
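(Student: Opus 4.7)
The plan is to prove the contrapositive: assume $\wt y$ meets $\CT_{\CO_{k,S}}$ transversally outside $\CR$ and that $f^{-1}(y)$ is $r$-cycle-split; I want to conclude that $\CF^{-1}(P)$ at $P := (\wt y \bmod \pi_v)$ is combinatorially $r$-cycle-split. The central device is the base change $\CX_{\wt y} := \CX \times_\CY \Spec \CO_{k_v}$, which is a proper, flat $\CO_{k_v}$-model of $f^{-1}(y)$ with special fibre equal to $\CF^{-1}(P)$. Once I can show that $\CX_{\wt y}$ is regular, Lemma~\ref{lem:special-generic} (converse direction) transfers $r$-cycle-splitness of the generic fibre into the combinatorial invariant bound $I_{\CF^{-1}(P)}(\Frob_v)\mid r$ on the special fibre, which is exactly combinatorial $r$-cycle-splitness.

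Next I would build $\CR$ so that this regularity can be verified. I would take $\CR$ to be the union of three closed subsets of $\CT$, each of codimension $\geq 2$ in $\CY_{\CO_{k,S}}$: (i) the singular locus of $\CT$, so that outside $\CR$ the divisor $\CT$ is itself regular; (ii) a closed subset outside of which Lemma~\ref{lem:irr-functor} applied fibrewise to the components of $\CT$ ensures that $\Irr^m_{\CX/\CY}$ is finite étale over each component of $\CT\setminus\CR$; and (iii) a closed subset outside of which the reduced preimage of $\CT$ in $\CX$ is an strict normal crossings divisor with the multiplicities prescribed by the generic structure of each component. After enlarging $S$ a finite number of times, these conditions spread out to the integral model.

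With this preparation, verifying regularity of $\CX_{\wt y}$ splits into two cases. Away from points lying over $P$, the map $\wt y$ avoids $\CT$ (by transversality) and $\CF$ is smooth there; hence $\CX_{\wt y}$ is smooth, in particular regular, outside the fibre over $P$. At a point $x \in \CF^{-1}(P)$, transversality of $\wt y$ with $\CT$ translates into the condition that a local equation of $\CT$ in $\CO_{\CY, P}$ pulls back to a uniformizer of $\CO_{k_v}$. Combined with the local snc description of $\CF^{-1}(\CT)\subset\CX$ guaranteed by the choice of $\CR$, a direct computation in regular parameters — writing $\CX$ locally as $\CY[u_1,\dots,u_s]/(u_1^{m_1}\cdots u_s^{m_s} - ut)$ for a local equation $t$ of $\CT$ and a unit $u$, modulo smooth parameters — shows that $\CO_{\CX_{\wt y}, x}$ is regular. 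I expect this step to be the main obstacle, because it requires an honest local model for $\CF$ near $\CT$, essentially the log-smoothness input already exploited in \cite{pseudo-split}; any failure of the snc picture at $x$ would have to be accommodated by enlarging $\CR$ while preserving codimension $\geq 2$.

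Given that $\CX_{\wt y}$ is thus a proper, regular, flat $\CO_{k_v}$-model of $f^{-1}(y)$, Lemma~\ref{lem:special-generic} yields the contrapositive: if $\CF^{-1}(P)$ is not combinatorially $r$-cycle-split, then $I_{\CX_{\wt y,k(v)}}(\Frob_v)\nmid r$, so $f^{-1}(y)$ cannot be $r$-cycle-split.
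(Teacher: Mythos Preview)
Your overall architecture coincides with the paper's: form the base change $\CX_{\wt y}=\CX\times_{\CY}\Spec\CO_{k_v}$, show it is a regular proper flat $\CO_{k_v}$-model of $f^{-1}(y)$, and then invoke the converse direction of Lemma~\ref{lem:special-generic}. That part is exactly right.

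Where you diverge is in the mechanism producing regularity, and your instinct that step~(iii) is ``the main obstacle'' is correct---in fact that step does not go through as written. The hypothesis of the proposition does not include log-smoothness, only that $\CF$ is smooth off $\CT$; there is no reason the reduced preimage $\CF^{-1}(\CT)_{\mathrm{red}}$ should be snc in $\CX$ outside a set whose image in $\CY$ has codimension~$\geq 2$. For instance, with $\CY=\mathbb A^2_{s,t}$, $\CT=\{t=0\}$ and $\CX=\{y^2=x^3+t\}\subset\CY\times\mathbb A^2_{x,y}$, the cuspidal locus $\{x=y=0\}\subset\CF^{-1}(\CT)$ dominates all of $\CT$, so no choice of $\CR\subset\CT$ of codimension~$\geq 2$ in $\CY$ removes it. Your proposed local chart $\CY[u_1,\dots,u_s]/(u_1^{m_1}\cdots u_s^{m_s}-ut)$ is precisely a log-smooth chart, and that input only becomes available in Section~\ref{sec:sufficient}, not here.

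The paper replaces your (iii) by a weaker and generically valid condition: after enlarging $S$, one chooses $\CR$ so that $\CF$ is \emph{flat} on $\CY\setminus\CR$ (generic flatness over a regular base; since $\CF$ is already smooth off $\CT$, the non-flat locus lies in $\CT$ and has codimension $\geq 1$ there) and $\CF$ is \emph{submersive} over $\CT\setminus\CR$ (generic submersivity in characteristic~$0$, \cite[Theorem~2.4]{smeets-loughran}). Submersivity at $x\in\CF^{-1}(P)$ means that the cotangent map $\mathfrak m_P/\mathfrak m_P^2\hookrightarrow\mathfrak m_x/\mathfrak m_x^2$ is injective, so a regular system of parameters at $P$ lifts to part of one at $x$; since the ideal of the section $\wt y$ at $P$ is generated by such parameters, the local ring of $\CX_{\wt y}$ at $x$ is a quotient of the regular local ring $\CO_{\CX,x}$ by part of a regular system, hence regular. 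This bypasses any need for an snc description of $\CF^{-1}(\CT)$ and is the substitution you should make in your argument.
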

\begin{proof}
 This is a variant of \cite[Theorem 2.8]{smeets-loughran}: After possibly enlarging $S$, $\CR$ can be chosen of codimension $2$ in a way such that
 \begin{enumerate}
  \item by generic flatness for regular schemes, $\CF$ is flat on the complement $\CY\setminus\CR$,  and
  \item by generic submersivity \cite[Theorem 2.4]{smeets-loughran} in characteristic $0$, $\CF$ is submersive (i.e.\ surjective on tangent spaces) over $\CT\setminus\CR$.
 \end{enumerate}
 Then $\CX\times_\CY \wt y$ is regular and its special fibre is not combinatorially $r$-cycle-split. The rest follows by Lemma\nobreakspace \ref{lem:special-generic}.
\end{proof}

\begin{proposition}\label{prop:non-surjectivity}
 Let $\vartheta\in Y^{(1)}$ be a codimension $1$ point of $Y$. There exists a finite set of places $S$ such that for all $v\notin S$ the following holds: if $s^{0,r}_{f,\vartheta}(v)<1$, then $f$ is not arithmetically $r$-cycle-surjective.
\end{proposition}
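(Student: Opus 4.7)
The plan is to use Proposition~\ref{prop:asymptotics}, applied to the family above the closure of $\vartheta$, to produce a $k(v)$-point whose fibre is not combinatorially $r$-cycle-split, and then to lift it to a transversal section so that Proposition~\ref{prop:transversal} supplies a $k_v$-point of $Y$ with non-$r$-cycle-split fibre.

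Concretely, I fix a proper regular model $\CF:\CX\to\CY$ of $f$ over $\CO_{k,S}$ and a reduced divisor $\CT\subset\CY$ with $\CF$ smooth away from $\CT$; this is possible after enlarging $S$. Let $T\subset Y$ be the reduced closure of $\vartheta$ and $\CT_\vartheta\subset\CY$ its Zariski closure, which I arrange to be a component of $\CT$. After further enlarging $S$, I arrange that $\CY$ is smooth over $\CO_{k,S}$, that $\CT_\vartheta$ is integral normal with $\Irr^m_{\CF^{-1}(\CT_\vartheta)/\CT_\vartheta}$ finite étale, and that Proposition~\ref{prop:transversal} applied to $\CT$ yields a subset $\CR\subset\CT$ of codimension $\geq 2$ in $\CY$. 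I then absorb into $\CR$ the intersections of $\CT_\vartheta$ with the other components of $\CT$ and the singular locus of $\CT_\vartheta\subset\CY$ (both codimension $\geq 2$ in $\CY$).

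For $v\notin S$ with $s^{0,r}_{f,\vartheta}(v)<1$, Proposition~\ref{prop:asymptotics}, applied to the restricted family $\CF^{-1}(\CT_\vartheta)\to\CT_\vartheta$ (whose generic point is $\vartheta$), gives
\[\#\{\bar y\in\CT_\vartheta(k(v)):\CF^{-1}(\bar y)\text{ not combinatorially $r$-cycle-split}\}=(1-s^{0,r}_{f,\vartheta}(v))\#\CT_\vartheta(k(v))+O(\N(v)^{n-3/2}),\]
with $n=\dim Y$. The hypothesis $s^{0,r}_{f,\vartheta}(v)<1$ forces, by the first clause of the definition of $s^{0,r}$, a degree-$1$ place of $k_K$ above $v$, whence a geometrically irreducible component of the special fibre of $\CT_\vartheta$; Lang-Weil then yields $\#\CT_\vartheta(k(v))=\Theta(\N(v)^{n-1})$. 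As $\CR$ has codimension $\geq 1$ in $\CT_\vartheta$, $\#\CR(k(v))=O(\N(v)^{n-2})$, so for $v$ outside a final finite enlargement of $S$ I can pick $\bar y\in(\CT_\vartheta\setminus\CR)(k(v))$ with $\CF^{-1}(\bar y)$ not combinatorially $r$-cycle-split. At such $\bar y$, $\CT_\vartheta$ is the unique component of $\CT$ present, regular in $\CY$, and $\CY$ is smooth over $\CO_{k_v}$. Picking a regular system of parameters $(t_1,\ldots,t_n)$ of $\CO_{\CY,\bar y}$ with $t_1$ a local equation of $\CT_\vartheta$, Hensel's lemma in the completion $\CO_{k_v}[[t_1,\ldots,t_n]]$ produces $\tilde y\in\CY(\CO_{k_v})$ lifting $\bar y$ with $\tilde y^*t_1=\pi_v$, which meets $\CT$ transversally at $\bar y\notin\CR$. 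Proposition~\ref{prop:transversal} then yields that $f^{-1}(y)$ is not $r$-cycle-split, where $y\in Y(k_v)$ is the generic fibre of $\tilde y$; in particular $f$ is not $r$-cycle-surjective at $v$ and hence not arithmetically $r$-cycle-surjective.

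The main obstacle is the uniformity of the $O$-constants in Proposition~\ref{prop:asymptotics}: they must depend only on the fixed model so that the non-cycle-split count dominates both the error and $\#\CR(k(v))$ for all $v$ outside a single finite set. This uniformity is precisely what the proposition asserts. The remaining ingredients — spreading out regularity and étaleness, identifying $s^0$-invariants for $f$ and $f|_T$, and the Hensel lifting at a smooth point — are routine.
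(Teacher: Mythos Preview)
Your proposal is correct and follows essentially the same strategy as the paper: use the asymptotics of Proposition~\ref{prop:asymptotics} (packaged there as Corollary~\ref{cor:combinatorial-fail}) over the closure of $\vartheta$ to locate a $k(v)$-point with non-combinatorially-$r$-cycle-split fibre, then lift it to a transversal $\CO_{k_v}$-section and invoke Proposition~\ref{prop:transversal}. The only cosmetic difference is in the lifting step: the paper cites the blow-up trick from \cite[Theorem~4.2]{pseudo-split} (blow up $\CY$ at $\bar y$ and pick a point on the exceptional divisor), whereas you produce the transversal lift directly via formal smoothness and an explicit choice of regular parameters; both are standard and equivalent, and your version has the advantage of making the avoidance of $\CR$ explicit through the Lang--Weil count, which the paper leaves implicit in its reference.
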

\begin{proof}
 If $s_{f,\vartheta}^{0,r}(v)<1$, let $\CE$ be the closure of $\vartheta$ in $\CY$. By Corollary\nobreakspace \ref {cor:combinatorial-fail}, for suitable $S$ we can find a point $y$ in the special fibre of $\CE$ above which the fibre is not combinatorially $r$-cycle-split. By Proposition\nobreakspace \ref {prop:transversal}, it therefore suffices to lift $y$ to an integral point intersecting $\CE$ transversally. The argument for this is well-known and literally the same as in \cite[Theorem 4.2]{pseudo-split} via blowing-up $\CY$ in $y$ and choosing a point on the exceptional divisor. 
\end{proof}

\subsection{Sufficient condition and proof of main theorem}\label{sec:sufficient}
Finally, using tools from logarithmic geometry, we can give a necessary and sufficient criterion for arithmetic $r$-cycle-surjectivity. We refrain from giving yet another exposition of logarithmic geometry and refer the reader to \cite{handbook}. All log schemes in this section will be fs Zariski log schemes.

For this section assume that we have a log smooth, proper model $\CF:(\CX,\CD)\to(\CY,\CE)$ of $f$ where $(\CX,\CD)$ and $(\CY,\CE)$ are Zariski log regular schemes (with divisorial log structure induced by $\CD$ and $\CE$) that are log smooth and proper over $\CO_{k,S}$ equipped with the trivial log structure for some finite set of places $S$. This can be achieved after a modification of $f$ by using Abramovich-Denef-Karu's toroidalisation theorem in \cite{karu} and spreading out. Denote by $D$ and $E$ the generic fibres of $\CD$ and $\CE$. Set $\CU:=\CX\setminus\CD$, $U:=X\setminus D$, $\CV:=\CY\setminus\CE$, and $V:=Y\setminus E$. On these open sets, the log structures are trivial.

By possibly enlarging $S$ in the spreading-out procedure above, we may assume that all irreducible components $\CE'$ of $\CE$ intersect the generic fibre non-trivially, i.e.\ their generic points lie in $Y$. This property of our chosen model is absolutely crucial for the method presented here. Namely, one can control the splitting behaviour of the fibre of $\CF$ over a point in the interior of $\CE'$ by the behaviour of the fibre of $f$ over the generic (characteristic $0$) point $\vartheta'$ of $\CE'$ (see Lemma~\ref{lem:irr-rep-strata}).

Let $v$ be a finite place of $k$. Let $k'/k$ be a finite extension and $w$ an extension of $v$ to $k'$. By the valuative criterion of properness, any closed point \[y:\Spec k'_w\to Y\] extends to a morphism \[\wt y:(\Spec\CO_{k'_w})^\dagger\to (\CY,\CE),\] where $(\Spec\CO_{k'_w})^\dagger$ is the log scheme equipped with the standard divisorial log structure defined by a uniformiser $\pi_w$ (i.e.\ with monoid given by $\CO_{k'_w}\setminus 0$). 

In \cite{kato}, Kato defines the fan $(F_T,\CM_{F_T})$, a locally monoidal space associated to a log regular scheme $T$, and a morphism $c_T:T\to F_T$. The preimage $U(t)$ of a point $t\in F_T$ under $c_T$ is called a \emph{logarithmic stratum} and is a locally closed subset of $T$. Then the points of $F_T$ can be identified with the generic points of the logarithmic strata. (In the older language of toroidal embeddings, these strata are connected components of repeated intersections of the boundary divisor.) To each $t\in F_T$, there corresponds a Kato subcone $F_T^t$ of $F_T$ which is the unique subcone with closed point $t$.

There is an attached \emph{logarithmic height} function $h_T:F_T(\Spec \BN)\to\BN$ which is defined in \cite[\S5]{pseudo-split} as follows. Under the $\BN$-valued point $t\in F_T(\Spec \BN)$, the closed point of $\Spec\BN$ is sent to the closed point of a Kato subcone $\Spec \BN^j$. The height $h_T(t)$ is defined as the sum of the images of the generators of $\BN^j$ under the map $\BN^j\to\BN$ induced by $t$.

Furthermore, a morphism $g$ of log regular schemes induces a morphism $F(g)$ of Kato fans. Because $F_{(\Spec\CO_{k'_w})^\dagger}\cong\Spec\BN$, this defines a logarithmic height $h_\CY(y)$ for any $y\in Y(k_w')$. Morally, the height of $y$ quantifies how often $\wt y$ intersects the special fibre.

\begin{lemma}\label{lem:irr-rep-strata}
 For any $t\in F_\CY$ and $m\in\BN$, the functor $\Irr^m_{\CF^{-1}(U(t))/U(t)}$ is representable by a finite étale scheme over $U(t)$.
\end{lemma}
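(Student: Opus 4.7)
The plan is to combine Romagny's representability result (Lemma~\ref{lem:irr-functor}) with the local structure of log smooth morphisms. Since $(\CY,\CE)$ is Zariski log regular, the logarithmic stratum $U(t)$ is a regular locally closed subscheme of $\CY$; after passing to an irreducible component I may assume it is integral. Applying Lemma~\ref{lem:irr-functor} directly to the restriction $\CF^{-1}(U(t))\to U(t)$ immediately yields representability of $\Irr^m_{\CF^{-1}(U(t))/U(t)}$ by a finite étale cover $W\to V$ over some dense open $V\subseteq U(t)$.

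The content of the lemma is to extend this representability from the generic open $V$ to all of $U(t)$. Here I would invoke Kato's local structure theorem for log smooth morphisms: étale-locally on $\CY$, the map $\CF\colon(\CX,\CD)\to(\CY,\CE)$ is, up to a strict smooth factor, the base change of a toric morphism between toric varieties. For such a toric morphism, the irreducible components of a geometric fibre and their multiplicities are governed entirely by the combinatorics of cones, and in particular remain constant as the image point varies within a single toric stratum of the base. Crucially, by our spreading-out assumption every component of $\CE$ meets the generic fibre, so the model is log smooth along all strata; thus the above toric local picture applies over every $U(t)$. It follows that, as $y$ varies over $U(t)$, the number of geometric irreducible components of $\CF^{-1}(y)$ of geometric multiplicity $m$ is locally constant.

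With this constancy in hand, the finite étale cover $W\to V$ extends to a finite étale cover over all of $U(t)$. Concretely, since $U(t)$ is regular hence normal, one may invoke Zariski--Nagata purity: $W\to V$ extends uniquely to a finite étale cover $W'\to U(t)$ once it is known to be unramified at every codimension $1$ point of $U(t)\setminus V$, which is exactly the content of the local constancy established above. Base change invariance of $\Irr^m$ (which holds because we use \emph{geometric} rather than apparent multiplicities, as already noted in Section~\ref{sub:dim0}) then identifies $W'$ with the sought representing object. Finally, representability as a scheme rather than merely an algebraic space follows from Knutson's criterion in the same way as in the proof of Lemma~\ref{lem:irr-functor}. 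The main obstacle is the log-geometric step: rigorously translating Kato's chart-wise description of log smooth maps into the statement that the irreducible-components-with-multiplicities of geometric fibres form a locally constant constructible sheaf on $U(t)$. Everything else reduces to purity and standard descent.
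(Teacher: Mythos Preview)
Your approach is more roundabout than the paper's and the purity step, as you have phrased it, does not close. The paper's proof is two lines of citation: \cite[Proposition~5.18]{pseudo-split} already shows that the full functor $\Irr_{\CF^{-1}(U(t))/U(t)}$ (no multiplicity constraint) is representable by a finite \'etale scheme over the \emph{entire} stratum $U(t)$, and \cite[Proposition~5.16]{pseudo-split} shows that (geometric) multiplicity is constant along logarithmic strata. Hence $\Irr^m$ is simply the open--closed subscheme of $\Irr$ given by the closure of $\Irr^m_{\CF^{-1}(t)/t}$, and is automatically finite \'etale. No passage through a dense open, no purity.

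The gap in your argument is the claim that ``the number of geometric irreducible components of multiplicity $m$ is locally constant'' yields unramifiedness of $W\to V$ at every codimension~$1$ point of $U(t)\setminus V$. That implication is not valid as stated: constancy of a fibre count does not tell you that the normalisation of $U(t)$ in $W$ is \'etale there, and even if you produced a finite \'etale extension $W'\to U(t)$, you would still owe a verification that $W'$ represents the functor $\Irr^m$ at points outside $V$ (agreement over $V$ plus base-change compatibility of $\Irr^m$ does not do this by itself, since $\Irr^m$ is not yet known to be a scheme there). What the toric local picture actually gives you, if you carry it through, is stronger: the irreducible components themselves, not just their number, are \'etale-locally constant along the stratum. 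But extracting that statement precisely \emph{is} the content of \cite[Propositions~5.16 and~5.18]{pseudo-split}, so once you have it you should use it directly rather than feed a weakened consequence into Zariski--Nagata. In short, your log-geometric intuition is right, but the route through Lemma~\ref{lem:irr-functor} plus purity is both unnecessary and, in the form written, incomplete.
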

\begin{proof}
 It is shown in \cite[Proposition 5.18]{pseudo-split} that $\Irr_{\CF^{-1}(U(t))/U(t)}$ is representable by a finite étale scheme over $U(t)$. By \cite[Proposition 5.16]{pseudo-split}, apparent multiplicity is constant along logarithmic strata for proper, log smooth morphisms of log regular schemes, and because log smoothness is stable under base change, the same is true for geometric multiplicity. Thus the subfunctor $\Irr_{\CF^{-1}(U(t))/U(t)}^m$ is represented by the closure of $\Irr_{\CF^{-1}(t))/t}^m$ in $\Irr_{\CF^{-1}(U(t))/U(t)}$.
\end{proof}

The following two propositions bound the intersection behaviour of points in $Y$ the fibres above which we have to consider.
\begin{proposition}\label{prop:intersection-bound}
 There is a positive integer $N$ with the following property. Let $B\in\BN$ be arbitrary and $v\notin S$ a place of $k$. If the fibre over each point $y\in V(k_v)$ with $h_\CY(y)\leq N$ has a zero-cycle $Z$ of degree $r$ with $\maxdeg Z\leq B$, then $f\times_k{k_v}$ is $r$-cycle-surjective.
\end{proposition}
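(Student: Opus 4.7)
The plan closely mirrors the logarithmic-height reduction of \cite[\S5]{pseudo-split}, now combined with the cycle-lifting machinery of Lemma \ref{lem:special-generic}.

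\textbf{Step 1 (reduction to $V$ with bounded witnesses).} By Lemma \ref{lem:birational} it suffices to produce, for every $y\in V(k_v)$, a degree-$r$ zero-cycle $Z \subset f^{-1}(y)$ whose maximum degree is bounded by a constant $B'$ depending only on the model. Shrinking $V$ if necessary so that $f$ is smooth over $V$ (generic smoothness), Lemma \ref{lem:variety-cycle-split} applied fibrewise with a fixed rational embedding $X\dashrightarrow\BP^\nu_Y$ furnishes the bound $B'$ automatically whenever some degree-$r$ cycle exists. Thus the problem becomes the pure existence of a degree-$r$ zero-cycle in $f^{-1}(y)$ for every $y\in V(k_v)$.

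\textbf{Step 2 (finite combinatorial classification).} Any $y\in V(k_v)$ extends by properness to $\wt y:(\Spec\CO_{k_v})^\dagger\to(\CY,\CE)$, inducing $F(\wt y):\Spec\BN\to F_\CY$ which sends the closed point to some $t\in F_\CY$ and refines to an $\BN$-point $(e_1,\dots,e_{j_t})\in F_\CY^t(\Spec\BN)\cong\BN^{j_t}$ with $h_\CY(y)=\sum e_i$. The log smoothness of $\CF$ supplies an étale-local toric description along $U(t)$, and Lemma \ref{lem:irr-rep-strata} identifies the geometric irreducible components with their multiplicities in $\CF^{-1}(\wt y\bmod\pi_v)$. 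One checks that this combinatorial datum -- the multiset of geometric multiplicities together with the $\Gal(\ov{k(v)}/k(v))$-action on components -- depends on $(e_1,\dots,e_{j_t})$ only through the residues $(e_i\bmod M)$ for an integer $M$ determined by the finitely many multiplicities arising in $\Irr^m_{\CF^{-1}(U(t))/U(t)}$ as $t$ ranges over the finite fan $F_\CY$. Consequently, combinatorial $r$-cycle-splitness of $\CF^{-1}(\wt y\bmod\pi_v)$ is a function of the finite datum $(t,(e_i\bmod M)_i)$.

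\textbf{Step 3 (choice of $N$ and cycle transfer).} Choose $N$ large enough that for each achievable type $(t,(e_i\bmod M)_i)$ there is a representative $\wt y_{\mathrm{rep}}\in\CY(\CO_{k_v})$ with generic fibre $y_{\mathrm{rep}}\in V(k_v)$ and height $h_\CY(y_{\mathrm{rep}})\leq N$; this is possible since the list of types is finite. Given arbitrary $y\in V(k_v)$, pair it with a representative $y_{\mathrm{rep}}$ of the same type. The hypothesis yields a degree-$r$ zero-cycle on $f^{-1}(y_{\mathrm{rep}})$; applying Lemma \ref{lem:special-generic} to the proper, flat, normal model $\CF^{-1}(\wt y_{\mathrm{rep}})$ forces $I_{\CF^{-1}(\wt y_{\mathrm{rep}}\bmod\pi_v)}(\Frob_v)\mid r$. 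By Step 2 the same divisibility holds for $\CF^{-1}(\wt y\bmod\pi_v)$, and the forward direction of Lemma \ref{lem:special-generic} produces the desired degree-$r$ zero-cycle on $f^{-1}(y)$.

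\textbf{Main obstacle.} The technical weight is concentrated in Step 2. Two things must be controlled simultaneously: that the multiplicities of components in the special fibre of $\CF^{-1}(\wt y)$ are determined by the residues of the $e_i$ modulo a uniform $M$ (a direct computation in the local toric model of the log smooth map), and that the Frobenius action on the components factors through the same residue datum. The latter requires extending \cite[Proposition 5.18]{pseudo-split} beyond the multiplicity-one setting: one has to keep coherent track of the full Galois action on $\Irr^m_{\CF^{-1}(U(t))/U(t)}$ under specialisation along $\wt y$, for every $m$ simultaneously. It is there that a careful bookkeeping of the étale covers representing the $\Irr^m$ functors, compatibly with the specialisation map of Kato fans, is needed.
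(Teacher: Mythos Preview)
Your strategy diverges from the paper's and, as written, has a genuine gap in Step~3.

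\textbf{What the paper actually does.} The paper never passes through the combinatorial index of the special fibre of $\CF^{-1}(\wt y)$ for an arbitrary-height $y$. Instead it fixes a partition of $F_\CY(\Spec\BN)$ coming from the images of the subcones $F_\CX^s$, and then applies the \emph{logarithmic Hensel lemma twice}. The first application produces, for a given $y$, a point $y'$ with the \emph{same reduction} $\bar y$ and with $F(\wt{y'})$ equal to a prechosen low-height element $b$ in the same partition cell; the hypothesis then hands you a zero-cycle $\sum n_h x_h'$ on $f^{-1}(y')$. The second application lifts each reduction $(\wt{x_h'}\bmod\pi_{w_h})$ to a closed point $x_h\in f^{-1}(y)$ directly, by filling in a square of log schemes whose bottom row is $\wt y$ (not $\wt{y'}$). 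No regularity or normality of the pulled-back model $\CX\times_\CY\Spec\CO_{k_v}$ along the arbitrary-height section $\wt y$ is ever invoked.

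\textbf{The gap in your Step 3.} You want to feed the scheme-theoretic pullback $\CF^{-1}(\wt y)$ into Lemma~\ref{lem:special-generic}. For the \emph{forward} implication you need this $\CO_{k_v}$-scheme to be proper, flat and normal; for the \emph{converse} implication (applied to $y_{\mathrm{rep}}$) you need it to be \emph{regular}, not merely normal as you write. None of this is available for a section $\wt y$ of height $>1$: a log smooth $\CF$ is not flat in general, and even where it is, the ordinary fibre product $\CX\times_\CY\Spec\CO_{k_v}$ need not be normal when $\wt y$ meets $\CE$ non-transversally (the fs log fibre product is log regular, but its underlying scheme differs from the scheme-theoretic one, so its special fibre is no longer $\CF^{-1}(\bar y)$). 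This is precisely why the paper, in Proposition~\ref{prop:sufficient}, restricts to transversal (height~$1$) intersections before invoking Lemma~\ref{lem:special-generic}, and why here it bypasses that lemma entirely in favour of log Hensel.

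\textbf{A secondary issue in Step 2.} The special fibre $\CF^{-1}(\wt y\bmod\pi_v)$ is literally $\CF^{-1}(\bar y)$ and depends only on the closed point $\bar y\in U(t)$, not on the tuple $(e_i)$ at all. So your periodicity claim is vacuous, but your ``type'' $(t,(e_i\bmod M)_i)$ does \emph{not} pin down the Frobenius action on $\Irr^m_{\CF^{-1}(\bar y)}$: two points in the same stratum $U(t)$ can have different Frobenius conjugacy classes on the finite \'etale cover of Lemma~\ref{lem:irr-rep-strata}. To make the comparison work you would have to choose $y_{\mathrm{rep}}$ with the \emph{same reduction} as $y$---which is exactly what the first log Hensel step in the paper achieves---and then you are back to needing the second log Hensel step rather than Lemma~\ref{lem:special-generic}.
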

\begin{proof}
 The proof is very similar to the one in \cite[Proposition 6.1]{pseudo-split}, which itself is an adaptation of \cite[4.2]{denef}, and we only sketch the steps and highlight the necessary changes.
 
 Let $F(f)_*:F_X(\Spec \BN)\to F_Y(\Spec \BN)$ be the morphism induced by $f$. Then define for all $s\in F_X$ and $t=F(f)_*(s)\in F_Y$:
 \[N_t=\min\{h_Y(t')|t'\in F_Y^t(\Spec\BN), t'\notin F(f)_*(F_X(\Spec \BN))\},\]
 \[N_{s,t}=\min\{h_Y(t')|t'\in F(f)_*(F_X^s(\Spec \BN))\subset F_Y^t(\Spec\BN)\}\]
 and $N=\max\{N_t,N_{s,t}\}$.
 
 We have thus a finite partition
 \begin{align*}
  F_Y(\Spec\BN)=&\bigsqcup_{t\in F_Y} F_Y^t(\Spec\BN)\setminus F(f)_*(F_X(\Spec \BN)) \\
  &\sqcup\bigsqcup_{s\in F_X} F(f)_*(F_X^s(\Spec \BN)),
 \end{align*}
 where each partition subset contains at least one element with height less than $N$.
 
 Given some arbitrary $y\in V(k_v)$, we have to show that its fibre is $r$-cycle-split with a uniform bound $B$ on the maximum degree of witnesses so that we can conclude by Lemma~\ref{lem:birational}. The proof works by twice applying the logarithmic analogue of Hensel's lemma for log smooth morphisms from \cite{hensel} (see also \cite[\S5.2]{pseudo-split}). 
 
 By the above, we may find $b\in F_Y(\Spec\BN)$ in the same partition subset as $F(\wt y)$ with $h_Y(b)\leq N$. Write $b=\sum_{i\in I}b_i v_i$, where $(v_i)_{i\in I}$ are the cones in $F_Y$ corresponding to the irreducible components $(\CE_i)_{i\in I}$ of $\CE$.
 
 Let $(\pi_i)_{i\in I}$ be local equations for $(\CE_i)_{i\in I}$ in an affine neighbourhood $\Spec A$ of $(\tilde y \bmod \pi_v)$ in $\CY$.
 
 Let $\ov\varphi$ be the canonical morphism
 \[\ov \varphi:\CO_{k_v}\setminus 0\to (\CO_{k_v}\setminus 0)/(1+\pi_v\CO_{k_v})\cong k(v)^*\oplus \BN\to k(v)^*.\]
 
 The first application of logarithmic Hensel's lemma is to the diagram 
  \[\begin{tikzcd}
  \Spec(k(v))^\dagger \arrow[d]\arrow[r] & (\CY,\CE)\arrow[d]\\
  \Spec(\CO_{k_v})^\dagger \arrow[r] & \Spec(\CO_{k_v})^\mathrm{tr}
 \end{tikzcd}.\]
 Here, $\Spec(\CO_{k_v})^\mathrm{tr}$ denotes the trivial log structure with monoid $\CO_{k_v}^*$ and $\Spec(k(v))^\dagger$ denotes the standard log point with log structure $k(v)^*\oplus\BN$, the restriction of $\Spec(\CO_{k_v})^\dagger$. 
 
 On the level of monoids, the upper horizontal arrow is defined by
 \begin{eqnarray*}
  A^*\times \BN^I &\to& k(v)^*\oplus \BN,\\
  \alpha \in A^* &\mapsto& (\alpha(\tilde y \bmod \pi_v),0),\\
  1_i\in\BN^I &\mapsto& (\ov\varphi(\pi_i(\wt y)),b_i),
 \end{eqnarray*}
 where $1_i$ is the generator of the $i$-th factor. All other morphisms are the obvious ones.
 
 The point $y'\in Y(k_v)$ yielded by logarithmic Hensel's lemma has the same reduction as $y$ but satisfies \[F(\wt{y'})=b\] and
 \[\ov \varphi(\pi_i(\wt y))=\ov \varphi(\pi_i(\wt y')).\]
 
 This is the first half of the proof and works verbatim as in \cite[Proposition 6.1]{pseudo-split}.
 
 For the second half, the assumption of our proposition now states that $f^{-1}(y')$ contains a zero-cycle of degree $r$ which we write as $\sum_h n_h x_h'$. Here, $x_h'$ is a closed point defined over a finite extension $l_{w_h}/k_v$ with $[l_{w_h}:k_v]\leq B$. We are done with the proof, if we can lift each $(\wt{x_h'} \bmod \pi_{w_h})$ to an $l_{w_h}$-point $x_h\in f^{-1}(y)$.
 
 To do so, we only have to slightly alter diagram $(6.3)$ from the original proof in \cite{pseudo-split} and apply (for the second time) logarithmic Hensel, namely to
 \[\begin{tikzcd}
  \Spec(k(w_h))^\dagger \arrow[d]\arrow[r] & (\CX,\CD)\arrow[d]\\
  \Spec(\CO_{l_{w_h}})^\dagger \arrow[r] & (\CY,\CE)
 \end{tikzcd}.\]
 On schemes, the upper horizontal morphism is given by $(\wt{x_h'} \bmod \pi_{w_h})$ and the lower horizontal morphism is defined by $\wt y$ composed with $\Spec(\CO_{l_{w_h}})^\dagger \to \Spec(\CO_{k_v})^\dagger$.
 
Let $e_h$ be the ramification index of $l_{w_h}/k_v$. Then on fans \[\Spec(\CO_{l_{w_h}})^\dagger \to \Spec(\CO_{k_v})^\dagger\] is just $\Spec\BN\to\Spec\BN$ induced by multiplication with $e_h$ and hence \[F(\Spec(\CO_{l_{w_h}})^\dagger \to (\CY,\CE))=e_hF(\wt y).\]
 
 In an affine neighbourhood $\Spec(B)$ of $(\wt{x_h'} \bmod \pi_{w_h})$ in $\CX$, $(\CX,\CD)$ has a chart $\BN^J\to B$ given by sending the generator $1_j$ to a local equation $\omega_j$ of the irreducible component $\CD_j$. Let $u_j$ be the Kato subcone corresponding to $\CD_j$. Since $F(\wt y)$ and $b$ were chosen in the same partition subset and \[F(f)_*(\wt x_h')=F(\wt{y'})=b,\] there exists $a=\sum_j a_j u_j\in F_X^s(\Spec\BN)$ such that $F(\wt x_h')\in F_X^s(\Spec \BN)$ and $F(\CF)_*(a)=F(\wt y)$, so \[F(\CF)_*(e_ha)=F(\Spec(\CO_{l_{w_h}})^\dagger \to (\CY,\CE)).\] Then the log structure of $\Spec(k(w_h))^\dagger \to (\CX,\CD)$ should be defined by the morphism of monoids
 \[\BN^J\to k(w_h)^*\oplus \BN, 1_j\mapsto (\ov\varphi(\omega_j(\wt x_h')),e_ha_j).\]
 
 The proof that this defines a commuting diagram of log schemes works as in \cite[Proposition 6.1]{pseudo-split}.
\end{proof}

The next proposition \cite[Proposition 5.10 and Proposition 6.2]{pseudo-split} gives us a modification of $\CF$ (obtained by pulling back $N-1$ barycentric log blow-ups of the target) which will turn out to be optimal in the sense that it is all we need to check arithmetic $r$-cycle-surjectivity.
\begin{proposition}\label{prop:ultimate-mod}
 Let $N$ be a positive integer. There is a log smooth modification $\CF':(\CX',\CD')\to(\CY',\CE')$ of $\CF$ with $\CX'$ and $\CY'$ smooth, proper over $\CO_{k,S}$ and geometrically integral with the following property:
 
 Let $Y'$ be the generic fibre of $\CY'$ and $E'$ be the generic fibre of $\CE'$. For any $v\notin S$ and each point $y\in (Y\setminus E)(k_v)=(Y'\setminus E')(k_v)$ with $1\leq h_{\CY}(y)\leq N$, $h_{\CY'}(y)=1$ and its reduction in $\CY'$ is a smooth point of the reduction of $\CE'$. 
\end{proposition}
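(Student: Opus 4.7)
The plan is to construct $\CY'$ by iterating $N-1$ barycentric log blow-ups on the Kato fan $F_\CY$ of $\CY$, and to define $\CX'$ by pulling back these log blow-ups along $\CF$. Since $\CF$ is log smooth by our initial toroidalisation step and log smoothness is preserved under base change by log blow-ups, this yields a log smooth modification $\CF': (\CX',\CD') \to (\CY',\CE')$ of $\CF$, whose classical smoothness and geometric integrality can then be checked cone by cone.

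\textbf{Combinatorial heart.} The key observation is that a single barycentric subdivision of a smooth cone $\BN^r$ with $r \geq 2$ strictly decreases the height of every interior $\BN$-point by at least $r-1 \geq 1$. Indeed, given $(a_1,\dots,a_r) \in \BN^r$ with $a_1,\dots,a_r \geq 1$ and choosing $j$ with $a_j = \min_i a_i$, the point lies in the subcone with rays $(e_i)_{i \neq j}$ together with the barycentre $(1,\dots,1)$, where its new coordinates are $(a_i - a_j)_{i \neq j}$ and $a_j$, yielding new height $\sum_i a_i - (r-1)a_j \leq \sum_i a_i - 1$. Applying $N-1$ iterations therefore guarantees that every $\BN$-point of the original fan $F_\CY$ of height at most $N$ has height $1$ in the subdivided fan $F_{\CY'}$, i.e.\ lies on a single ray.

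\textbf{Geometric verification.} Each barycentric subdivision corresponds to a log blow-up of $\CY$ whose centre is the (smooth) intersection of the boundary components indexed by the rays of the subdivided cone; the new cones $\langle e_1,\dots,\hat e_j,\dots,e_r,(1,\dots,1)\rangle$ have generators forming a $\BZ$-basis of the ambient lattice, so they are smooth, and $\CY'$ remains smooth, proper and geometrically integral. A point $y \in (Y\setminus E)(k_v) = (Y'\setminus E')(k_v)$ satisfying $h_{\CY'}(y)=1$ has $F(\wt y)$ lying on a single ray of $F_{\CY'}$, which translates to $\wt y$ meeting exactly one component $\CE'_j$ of $\CE'$ at a single point, and doing so transversally at a point of the smooth locus of the reduction of $\CE'$.

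\textbf{Main obstacle.} The delicate issue is to maintain smoothness of $\CX'$ in parallel with $\CY'$. Log smoothness of $\CF$ ensures that the induced morphism of fans $F_\CX \to F_\CY$ is locally modelled on a surjection of free monoids, so the preimage of a barycentric subdivision of $F_\CY$ is again a subdivision of $F_\CX$ into smooth cones, and the corresponding log blow-up of $\CX$ along the preimage of the centre is smooth; properness and geometric integrality of $\CX'$ follow because they are preserved by blowing up a smooth centre in a smooth proper geometrically integral scheme. This is exactly the content of \cite[Proposition 5.10 and Proposition 6.2]{pseudo-split}, and since the construction is entirely target-side and makes no reference to cycle-theoretic features, it transports verbatim to our present setting.
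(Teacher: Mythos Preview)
Your proposal is correct and follows the same approach as the paper: the paper does not give an independent proof but simply cites \cite[Proposition 5.10 and Proposition 6.2]{pseudo-split}, noting (just before the statement) that the modification is obtained by pulling back $N-1$ barycentric log blow-ups of the target---precisely the construction you sketch and ultimately defer to. One minor quibble: the cones you describe (spanned by $(e_i)_{i\neq j}$ and the barycentre) are those of the \emph{star} subdivision at the barycentre rather than the full barycentric subdivision, though your height bound still holds and the distinction is immaterial once you invoke the cited reference.
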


Now we can prove a sufficient criterion:
\begin{proposition}\label{prop:sufficient}
 Let $v\notin S$ and $\CF':(\CX',\CD')\to(\CY',\CE')$ a log smooth modification of $\CF$ as in Proposition\nobreakspace \ref {prop:ultimate-mod}. If $s^{0,r}_{f,\vartheta'}(v)=1$ for each generic point $\vartheta'$ of $D'$ (the generic fibre of $\CD'$), then $f\times_k{k_v}$ is $r$-cycle-surjective.
\end{proposition}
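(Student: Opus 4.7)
The plan is to combine Proposition~\ref{prop:intersection-bound}, which reduces the problem to fibres over boundary-near points of logarithmic height at most some uniform $N$, with Proposition~\ref{prop:ultimate-mod}, which forces every such point (on the modification) to meet the boundary divisor transversally at a smooth point of a single component $\CE'_{\vartheta'}$, and then transport the hypothesis $s^{0,r}_{f,\vartheta'}(v)=1$ to the reduction point by means of the stratum-wise finite étale structure of $\Irr^m$ given by Lemma~\ref{lem:irr-rep-strata}.

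First, invoking Lemma~\ref{lem:birational} it suffices to prove $r$-cycle-surjectivity of $f'\times_k k_v$. Fix a rational embedding $\iota:X'\dashrightarrow \BP^\nu_{Y'}$ defined on a dense open of the smooth locus of $f'$, set $B=\Phi(\iota)$ for $\Phi$ as in Lemma~\ref{lem:special-generic} (using uniform Chow-style bounds on fibre degrees to obtain a single $B$ that works for all fibres), and apply Proposition~\ref{prop:intersection-bound} with this $B$ to extract $N$. We must exhibit, for every $y\in (Y'\setminus E')(k_v)$ with $h_{\CY'}(y)\leq N$, a zero-cycle of degree $r$ on $f'^{-1}(y)$ of maximum degree at most $B$. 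If $h_{\CY'}(y)=0$, the integral $\wt y$ lands in the smooth locus of $\CF'$ whose generic fibre is geometrically integral, so for $v$ sufficiently large Lang-Weil (Lemma~\ref{lem:lang-weil}) supplies a $k_v$-point, hence an $r$-cycle of bounded degree.

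For $1\leq h_{\CY'}(y)\leq N$, Proposition~\ref{prop:ultimate-mod} forces $h_{\CY'}(y)=1$ and that the reduction $\wt y_v$ is a smooth $k(v)$-point of $\CE'_{k(v)}$. Hence $\wt y_v$ lies on a single geometric component of $\CE'_{k(v)}$, specialising a generic point $\vartheta'$ of $D'$ whose closure is an irreducible component $\CE'_{\vartheta'}$ of $\CE'$. Reasoning as in Proposition~\ref{prop:transversal}, transversality and smoothness of this intersection (after removing a codimension-$2$ set of $\CY'$ absorbed into $S$) imply that $\CX'\times_{\CY'}\wt y$ is a regular, proper, flat model of $f'^{-1}(y)$ over $\CO_{k_v}$ with special fibre $\CF'^{-1}(\wt y_v)$. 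By Lemma~\ref{lem:special-generic} it is then enough to check that $\CF'^{-1}(\wt y_v)$ is combinatorially $r$-cycle-split.

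The latter follows from the hypothesis $s^{0,r}_{f,\vartheta'}(v)=1$ as in the proof of Corollary~\ref{cor:cycle-split}: since $\wt y_v$ lies in the logarithmic stratum of $\CY'$ with generic point $\vartheta'$, Lemma~\ref{lem:irr-rep-strata} shows that the schemes $\Irr^m_{\CF'^{-1}(U(t))/U(t)}$ for that stratum are finite étale, so the combinatorial datum associated to $\CF'^{-1}(\wt y_v)$ is the specialisation of the one for $f'^{-1}(\vartheta')$. Functoriality of Frobenius matches $\Frob_{\wt y_v/\vartheta'}$ with the Frobenius at a degree-$1$ place of $k_{K'}$ above $v$, and the numerical hypothesis $s^{0,r}_{f,\vartheta'}(v)=1$ precisely forces every such Frobenius to satisfy $I_{f'^{-1}(\vartheta')}(\Frob_{\wt y_v/\vartheta'})\mid r$. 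Applying Lemma~\ref{lem:special-generic} then yields a zero-cycle of degree $r$ on $f'^{-1}(y)$ of maximum degree at most $\Phi(\iota)=B$, completing the proof via Proposition~\ref{prop:intersection-bound}. The main technical obstacle is the last step: one must spread out the étale covers $\Irr^m$ over the full closure $\CE'_{\vartheta'}$ and verify that $s^{0,r}$-type information at the generic point transports \emph{uniformly in $y$} to every $k(v)$-specialisation on that component, which forces one further enlargement of $S$ independent of $v$ and of $y$.
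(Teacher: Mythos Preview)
Your overall architecture matches the paper's: reduce via Proposition~\ref{prop:intersection-bound} to points of bounded logarithmic height, use Proposition~\ref{prop:ultimate-mod} to force transversal intersection with a single boundary component, transport the hypothesis $s^{0,r}_{f,\vartheta'}(v)=1$ along that stratum via Lemma~\ref{lem:irr-rep-strata} and Corollary~\ref{cor:cycle-split} to get combinatorial $r$-cycle-splitness of the special fibre, and conclude with Lemma~\ref{lem:special-generic}. One minor slip: Proposition~\ref{prop:intersection-bound} should be applied to $\CF$ and $h_\CY$, not to $\CF'$ and $h_{\CY'}$; the modification $\CF'$ is already fixed by Proposition~\ref{prop:ultimate-mod} for the $N$ coming from $\CF$, and the implication used is $h_\CY(y)\le N\Rightarrow h_{\CY'}(y)=1$, not $h_{\CY'}(y)\le N\Rightarrow h_{\CY'}(y)=1$.

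The genuine gap is your justification that $\CX'\times_{\CY'}\wt y$ is a regular model. You appeal to Proposition~\ref{prop:transversal}, but the regularity there comes from \emph{generic submersivity} of the morphism and only holds over $\CE'\setminus\CR$ for a codimension-$2$ subset $\CR\subset\CY'$. This $\CR$ lives in $\CY'$ and cannot be ``absorbed into $S$'': enlarging $S$ removes places of $k$, not closed subsets of $\CY'$, and for every $v$ there will be $k_v$-points $y$ with $\wt y_v\in\CR_{k(v)}$. Concretely, a log smooth $\CF'$ fails to be submersive precisely where several components of $\CD'$ meet over $\CE'$ (think of the node of $xy=t$), so $\CR$ is nonempty and unavoidable. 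In Proposition~\ref{prop:transversal} this is harmless because one only needs a \emph{single} good point; here you need \emph{all} points.

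The paper closes this gap without submersivity, using the log structure itself. Because $\wt y_v$ is a smooth point of $\CE'_{k(v)}$ and $\wt y$ meets $\CE'$ transversally, $\wt y:(\Spec\CO_{k_v})^\dagger\to(\CY',\CE')$ is a saturated morphism, and $\CF'$ is integral away from the closure of $E'_\mathrm{sing}$; hence the fs log fibre product $\CX'_y$ has underlying scheme equal to the ordinary fibre product. Log smoothness being stable under fs base change, $\CX'_y$ is log smooth over the log regular $(\Spec\CO_{k_v})^\dagger$, hence log regular, hence Cohen--Macaulay and in particular normal. Normality (not regularity) is exactly what the forward implication of Lemma~\ref{lem:special-generic} requires, and the argument then finishes as you indicate.
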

\begin{proof}
 By Chow's lemma, pick a rational embedding $\iota:\CX'_{k(v)}\dashrightarrow\BP^\nu_{\CY'_{k(v)}}$ and let $B=\Phi(\iota)$. Let $\CV':=\CX'\setminus\CE'$. It is enough to prove that the fibre over a point $y\in V'(k_v)=V(k_v)$ has a zero-cycle $Z$ of degree $r$ with $\maxdeg Z\leq B$. If the reduction of $y$ in $\CY$ is in $\CV$, we know that $\CF'^{-1}(\tilde y \bmod \pi_v)\cap\CU$ is non-empty smooth and geometrically integral (by assumption on the generic fibre), so $f^{-1}(y)$ has a zero-cycle of degree $1$ with maximum degree less than $B$ by the Lang-Weil estimates.
 
 Otherwise, assume that $\wt y$ intersects $\CE'$. By Proposition\nobreakspace \ref {prop:intersection-bound}, we can restrict ourselves to $y$ with $h_\CY(y)\leq N$. 
 
 Because of Proposition\nobreakspace \ref {prop:ultimate-mod}, $\wt y$ intersects transversally a codimension $1$ logarithmic stratum $\CZ$ of $(\CY',\CE')$. By Lemma\nobreakspace \ref {lem:irr-rep-strata} $\Irr^m_\CF$ is representable by a finite étale cover over logarithmic strata. Hence by assumption of $s^{0,r}_{f,\eta_\CZ}(v)=1$ and Corollary\nobreakspace \ref {cor:cycle-split}, the fibre $\CF'^{-1}(\tilde y \bmod \pi_v)$ is combinatorially $r$-cycle-split.
 
 The closure $\wt y$ of $y$ in $\CY'$ lies outside the Zariski closure of $E'_{\mathrm{sing}}$ (the singular locus of $E'$). Therefore $\CF'$ is integral outside the closure of $E'_\mathrm{sing}$ by \cite[Cor. 4.4(ii)]{fontaine-illusie}. Hence, the fibre product $\CX'_y:=(\CX',\CD')\times_{\CF',(\CY',\CE'),\wt y}(\Spec\CO_{k_v})^\dagger$, taken in the category of Zariski log schemes, is fine. Its underlying scheme agrees with the fibre product in schemes \cite[(1.6)]{fontaine-illusie}. Since $\wt y$ intersects $\CE'$ transversally, it follows that $\wt y:(\Spec\CO_{k_v})^\dagger\to (\CY',\CE')$ is a saturated morphism as in \cite{tsuji}. Hence by \cite[I.3.14]{tsuji}, $\CX'_y\to(\CX',\CD')$ is saturated and so is $\CX'_y$ \cite[II.2.12]{tsuji}. Thus $\CX'_y$ coincides with the fibre product taken in the category of fs log schemes.
 
 Log smoothness is stable under fs base change \cite[Proposition 12.3.24]{gabber-ramero}, so $\CX'_y$ is log regular, being log smooth over the log regular base $(\Spec\CO_{k_v})^\dagger$ \cite[Theorem 8.2]{kato}. It follows that $\CX'_y$ is Cohen-Macaulay and in particular normal \cite[Theorem 4.1]{kato}.
 
 That $\CF'^{-1}(y)=f^{-1}(y)$ is $r$-cycle-split with a witness $Z$ of $\maxdeg Z\leq B$ now follows from its reduction being combinatorially $r$-cycle-split and Lemma\nobreakspace \ref {lem:special-generic}.
\end{proof}

The main result Theorem\nobreakspace \ref {thm:main} reformulated for any $r\in\BN$ is now an easy corollary of Proposition\nobreakspace \ref {prop:non-surjectivity} and Proposition\nobreakspace \ref {prop:sufficient}.
\begin{theorem}\label{thm:mainr}
  Let $f:X\to Y$ be a dominant morphism between proper, smooth, geometrically integral varieties over a number field $k$ with geometrically integral generic fibre.
 
 Then $f$ is arithmetically $r$-cycle-surjective outside a finite set $S$, if and only if for each modification $f':X'\to Y'$ and for each codimension $1$  point $\vartheta'$ in $Y'$, the fibre $f'^{-1}(\vartheta')$ is combinatorially $r$-cycle-split.
\end{theorem}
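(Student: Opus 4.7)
The plan is to derive Theorem~\ref{thm:mainr} by combining the necessary condition of Proposition~\ref{prop:non-surjectivity}, the sufficient condition of Proposition~\ref{prop:sufficient}, and the birational invariance lemmas of Section~5.1 to move freely between $f$ and its modifications.

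For the ``only if'' direction I argue by contrapositive. Suppose some modification $f':X'\to Y'$ of $f$ admits a codimension $1$ point $\vartheta'$ whose fibre is not combinatorially $r$-cycle-split. By the final corollary of Section~4, this gives $s^{0,r}_{f',\vartheta'}(v)<1$ for infinitely many places $v$. Applying Proposition~\ref{prop:non-surjectivity} directly to $f'$ at such a $v$ outside the relevant finite set, one deduces that $f'$ is not arithmetically $r$-cycle-surjective. It remains to observe that arithmetic $r$-cycle-surjectivity descends along modifications: there is a dense open $V\subseteq Y$ whose preimage $V'\subseteq Y'$ satisfies $f|_{f^{-1}(V)}\cong f'|_{f'^{-1}(V')}$, so cycle-surjectivity of $f$ restricts to $f'$ over $V'$ and, by the lemma of Section~5.1 that extends the property from a dense open, to all of $Y'$. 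This contradicts the previous step.

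For the ``if'' direction, the first move is to invoke the toroidalisation theorem of Abramovich-Denef-Karu, as already prepared at the start of Section~5.3, and to spread out over $\CO_{k,S}$ in order to obtain a log smooth proper model $\CF:(\CX,\CD)\to(\CY,\CE)$ of some modification of $f$ satisfying the standing assumption that every irreducible component of $\CE$ meets the generic fibre. Proposition~\ref{prop:ultimate-mod} then provides a further log smooth modification $\CF':(\CX',\CD')\to(\CY',\CE')$. Since each generic point $\vartheta'$ of $D'$ is a codimension $1$ point of $Y'$, the hypothesis gives that $f'^{-1}(\vartheta')$ is combinatorially $r$-cycle-split, equivalently $s^{0,r}_{f',\vartheta'}(v)=1$ for almost all $v$. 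Because $D'$ has only finitely many irreducible components, a single enlargement of $S$ trivialises all these $s^0$-invariants simultaneously, and Proposition~\ref{prop:sufficient} then yields that $f'\times_k k_v$ is $r$-cycle-surjective for every $v\notin S$. Transferring back via the same birational invariance step (now in the opposite direction) concludes that $f$ is arithmetically $r$-cycle-surjective.

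At this stage only bookkeeping remains, and the main subtlety to track is that both the descent and the ascent across a modification depend on the uniform-degree witness bound supplied by Lemma~\ref{lem:variety-cycle-split} together with Lemma~\ref{lem:birational}; without such a bound one could not extend cycle-surjectivity from a dense open to the whole base. One must also verify that the toroidal model produced at the start of the sufficiency argument is genuinely a modification of $f$ in the sense of the introduction, so that the hypothesis applies to its codimension $1$ fibres. All the genuine work has already been absorbed into the two propositions and the logarithmic Hensel machinery underlying them, so the remaining argument really is a short corollary.
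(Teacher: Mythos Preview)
Your proposal is correct and follows essentially the same approach as the paper, which simply states that the theorem is an easy corollary of Proposition~\ref{prop:non-surjectivity} and Proposition~\ref{prop:sufficient}; you have just made the birational transfer steps explicit. One minor slip: Proposition~\ref{prop:sufficient} concludes $r$-cycle-surjectivity for the morphism with model $\CF$ (your toroidalised modification), not for the further modification $f'$ coming from $\CF'$, but since both are modifications of the original $f$ your transfer-back step handles this anyway.
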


\begin{remark}
The above result cannot be applied directly to \protect \MakeUppercase {C}onjecture\nobreakspace \ref {conj:artin}, which requires to prove that the exceptional set $S$ in Theorem\nobreakspace \ref {thm:main} is empty. We can nevertheless say the following.

In contrast to the case of Theorem\nobreakspace \ref {thm:lss}, the set $S$ for which we prove Theorem\nobreakspace \ref {thm:main} does not depend on Lang-Weil estimates but only on the existence of a sufficiently nice log smooth model of $f$ as stated in Section\nobreakspace \ref {sec:sufficient}. However, the existence of such models remains open. As far as zero-cycles are concerned, one may try to construct log smooth models by allowing alterations of $f$ instead of modifications and \cite{temkin} contains strong results in this direction. Unfortunately, even those models do not suffice since the creation of codimension $1$ logarithmic strata is not controlled.
\end{remark}

\begin{remark}
 Because the criterion of the preceding main theorem is stable under extensions of the ground field $k$, we could have also defined $r$-cycle-surjective to mean the existence of a zero-cycle of degree $r$ on each fibre over \emph{closed} points of $Y_{k_v}$ (instead of fibres over \emph{$k_v$-rational} points as in Definition\nobreakspace \ref {def:cycle-surjective}). The criterion of Theorem\nobreakspace \ref {thm:mainr} then shows that either definition leads to equivalent notions of arithmetic $r$-cycle-surjectivity (see the related observation by Liang \cite[Remark 6.5]{pseudo-split}).
 
 While using closed points is arguably the more natural definition, we prefer to keep Definition\nobreakspace \ref {def:cycle-surjective} in analogy with \cite{pseudo-split}.
\end{remark}

\begin{example}\label{ex:cycle-surjective}
 We give an example of a morphism for which one can show that it is arithmetically cycle-surjective but not arithmetically surjective.
 
 Let $A=\oplus_{i=1}^n k_i$ be a finite étale algebra over a number field $k$. Assume that $A$ is almost everywhere locally cycle-split but not pseudo-split (e.g.\ one of the algebras in Examples\nobreakspace \ref {ex:upgrade} and\nobreakspace  \ref {ex:non-upgrade}). Then one can define the multinorm torus $\mathrm{R}^1_{A/k}\BG_m$ through
 \[0\to\mathrm{R}^1_{A/k}\BG_m\to \mathrm{R}_{A/k}\BG_m\xrightarrow{N_{A/k}} \BG_m\to 0\]
 where the middle term maps to $\BG_m$ via the norm maps.
 
 The $1$-parameter family of torsors for $\mathrm{R}^1_{A/k}\BG_m$ given by 
 \[\mathrm{N}_{A/k}(x)=t\neq 0\]
 can be compactified to a proper, smooth, geometrically integral variety $X$ with a morphism $f$ to $\BP^1_k$.
 
 It is easy to see that for all $v\notin S$, all smooth fibres over $k_v$-points have a zero-cycle of degree $1$. Hence, $f$ is arithmetically cycle-surjective. On the other hand, since $A\otimes_k k_v$ is non-split for infinitely many $v$, it follows from \cite[Lemma 5.4]{smeets-loughran}, that $f$ is not arithmetically surjective.
\end{example}

\subsection*{Acknowledgements}
The author thanks M.~Bright, J.-L.~Colliot-Thélène, J.~Nicaise, A.~Skorobogatov and O.~Wittenberg and the anonymous referee for comments. This work was supported by the Engineering and Physical Sciences Research Council [EP/ L015234/1], the EPSRC Centre for Doctoral Training in Geometry and Number Theory (The London School of Geometry and Number Theory), University College London.

\bibliographystyle{alpha}
\newcommand{\etalchar}[1]{$^{#1}$}

\address{Department of Mathematics, South Kensington Campus\\
Imperial College London, LONDON, SW7 2AZ, UK\\
\email{d.gvirtz15@imperial.ac.uk}\\
\received{}}
\end{document}